\newtheorem{lemma}{Lemma}[section]
\newtheorem{theorem}[lemma]{Theorem}
\newtheorem{conjecture}[lemma]{Conjecture}
\newtheorem{coro}[lemma]{Corollary}
\DeclareMathOperator{\conv}{conv}
\newcommand{\R}{\mathbb{R}}
\newcommand{\F}{\mathcal{F}}
\newcommand{\K}{\mathcal{K}}
\title{Transversals to Colorful Intersecting Convex Sets}
\author[C. Gomez-Navarro]{Cuauhtemoc Gomez-Navarro}
\address[C. Gomez-Navarro]{Facultad de Ciencias, UNAM, Ciudad de Mexico, Mexico}
\email{cgn@ciencias.unam.mx}
\author[E. Roldán-Pensado]{Edgardo Roldán-Pensado}
\address[E. Roldán-Pensado]{Centro de Ciencias Matemáticas, UNAM Campus Morelia, Morelia, Mexico}
\email{e.roldan@im.unam.mx}
\begin{document}
	
	\begin{abstract}
		Let $K$ be a compact convex set in $\R^{2}$ and let $\F_{1}, \F_{2}, \F_{3}$ be finite families of translates of $K$ such that $A \cap B \neq \emptyset$ for every $A \in \F_{i}$ and $B \in \F_{j}$ with $i \neq j$.
		A conjecture by Dol’nikov is that, under these conditions, there is always some $j \in \{ 1,2,3 \}$ such that $\F_{j}$ can be pierced by $3$ points. In this paper we prove a stronger version of this conjecture when $K$ is a body of constant width or when it is close in Banach-Mazur distance to a disk. We also show that the conjecture is true with $8$ piercing points instead of $3$. Along the way we prove more general statements both in the plane and in higher dimensions.
		
		A related result was given by Martínez-Sandoval, Roldán-Pensado and Rubin. They showed that if $\mathcal{F}_{1}, \dots, \mathcal{F}_{d}$ are finite families of convex sets in $\mathbb{R}^{d}$ such that for every choice of sets $C_{1} \in \F_{1}, \dots, C_{d} \in \F_{d}$ the intersection $\bigcap_{i=1}^{d} {C_{i}}$ is non-empty, then either there exists $j \in \{ 1,2, \dots, n \}$ such that $\mathcal{F}_j$ can be pierced by few points or $\bigcup_{i=1}^{n} \mathcal{F}_{i}$ can be crossed by few lines. We give optimal values for the number of piercing points and crossing lines needed when $d=2$ and also consider the problem restricted to special families of convex sets.
	\end{abstract}
	
	\keywords{Geometric transversals, Colorful Helly-type theorems, KKM theorem}
	
	\maketitle
	
	\section{Introduction}
	In this paper we study problems with hypothesis similar to that of the Colorful Helly theorem, we are interested in finding transversals to the families involved.
	
	Let $\F$ be a family of sets in $\R^{d}$, a set $T \subset \R^{d}$ is a \textit{transversal} to the family $\F$ if every set $C \in \F$ intersects the set $T$. Additionally, if $T$ is a $k$-flat ($k$-dimensional affine subspace of $\R^{d}$) and is transversal to the family $\F$, we say that $T$ is a $k$-flat transversal. For example, a \textit{line transversal} is a line that intersects every member of $\F$ and a \textit{hyperplane transversal} is a hyperplane that intersects every member of $\F$.
	
	We say that the family $\F$ can be \textit{pierced} by $n$ points if there exist $n$ points $p_{1}, \dots, p_{n} \in \R^{d}$ such that every set $C \in \F$ intersects at least one of the points $p_{1}, \dots, p_{n}$; in other words, $P = \{ p_{1}, \dots, p_{n} \}$ is a transversal to the family $\F$. Similarly, we say that the family $\F$ can be \textit{crossed} by $n$ lines if there exist $n$ lines $l_{1}, \dots, l_{n} \in \R^{d}$ such that every set $C \in \F$ intersects at least one of the lines $l_{1}, \dots, l_{n}$; in other words, $l_{1} \cup \dots \cup l_{n}$ is a transversal to the family $\F$.
	
	We begin by giving a short list of theorems involving transversals including Helly-type theorems and Colorful theorems before presenting our main results in Section \ref{sec:main}.
	
	\subsection{Helly-type theorems}
	
	Helly's theorem \cite{Helly1923} is probably one of the most famous theorems in discrete and convex geometry. It states that if a finite family of convex sets in $\R^{d}$ satisfies that every $d+1$ or fewer of them have non-empty intersection, then the whole family has non-empty intersection. The number $ d+1 $ cannot be improved in Helly's theorem. However, it is possible to strengthen the hypothesis to obtain stronger results. For instance, Grünbaum proved the following theorem for families of homothetic copies of a convex set.
	
	\begin{theorem}[\cite{Grunbaum1959}]\label{thm:Grunbaum}
		For any integer $d \geq 1$ there exists an integer $c = c(d)$ such that the following holds. If $\F$ is a finite family of homothetic copies of a convex set in $\R^{d}$ and any two members of $\F$ have non-empty intersection, then $\F$ can be pierced by $c$ points.
	\end{theorem}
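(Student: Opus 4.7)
\emph{Proof plan.} The strategy is to choose a smallest element of $\F$ and reduce the piercing problem to a covering problem.

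Since $\F$ is finite, pick $A \in \F$ with the smallest homothety ratio. After an affine change of coordinates we may take $A = K$, so that every other $B \in \F$ has the form $\lambda K + v$ with $\lambda \geq 1$. The first key step is to show that each such $B$ contains a translate $K + w_B$ of $K$ with $w_B \in K - K$. Given any $p \in A \cap B$, write $p = \lambda q + v$ with $q \in K$ and set $w_B := p - q$, which visibly lies in $K - K$. The inclusion $K + w_B \subseteq B$ then reduces to checking $K - q \subseteq \lambda(K - q)$, which holds because $K - q$ is a convex set containing the origin and $\lambda \geq 1$.

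The problem thus reduces to finding points $p_1, \dots, p_c \in \R^{d}$ such that every translate $K + w$ with $w \in K - K$ contains some $p_i$. Since $p_i \in K + w$ is equivalent to $w \in p_i - K$, the task is to cover the difference body $K - K$ by $c$ translates of $-K$. Any such cover produces, via the first step, a piercing set of size $c$ for $\F$.

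The main obstacle is then to bound $c$ by a constant depending only on $d$, uniformly in $K$. The Rogers--Shephard inequality gives $\operatorname{vol}(K-K) \leq \binom{2d}{d} \operatorname{vol}(K) = \binom{2d}{d}\operatorname{vol}(-K)$, and a classical economical covering theorem (covering one convex body by translates of another using essentially a volume ratio, up to a factor polynomial in $d$) then provides a cover by $c(d)$ translates of $-K$. This $c(d)$ is the sought piercing number, and the theorem follows.
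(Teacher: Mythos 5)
The paper does not prove this statement; it is quoted from Grünbaum's 1959 paper as a known result, so there is no internal proof to compare against. Your argument is the standard route to such piercing theorems and the core of it is correct: taking $A=K$ to be a member of minimal homothety ratio, writing any other member as $B=\lambda K+v$ with $\lambda\ge 1$, and checking $K-q\subseteq\lambda(K-q)$ for $q\in K$ is exactly right, so every member of $\F$ contains a translate $K+w$ with $w\in K-K$ and the piercing problem reduces to covering $K-K$ by $c$ translates of $-K$.

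Two points to tighten. First, ``homothetic copies'' must be read as positive homothets (the convention used throughout this paper); if negative ratios were allowed, the minimal-ratio normalization would not go through as stated. Second, the covering step has a mismatch: the economical covering theorem you invoke (Rogers, Rogers--Zong) bounds the number of translates of $L$ needed to cover $M$ by $\frac{\mathrm{vol}(M-L)}{\mathrm{vol}(L)}\cdot O(d\log d)$, so with $M=K-K$ and $L=-K$ you need control of $\mathrm{vol}(K-K+K)$, which Rogers--Shephard alone does not literally give. The repair is immediate: placing the centroid of $K$ at the origin gives $-K\subseteq dK$, hence $K-K+K\subseteq(d+2)K$ and the volume ratio is at most $(d+2)^d$. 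Alternatively one can bypass volumes entirely via John's theorem: after a linear map, $K$ contains a unit ball and is contained in a ball of radius $d$, so $K-K$ lies in a ball of radius $2d$, which can be covered by $(O(d))^d$ unit balls and hence by that many translates of $-K$. With either repair the constant $c(d)$ depends only on $d$ and the theorem follows.
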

	
	
	In \cite{Danzer1986} was solved the case of circles in the plane, in \cite{Dumitrescu2011} was solved the case of triangles in the plane and in \cite{Kim2006} was given a bound for the general planar case. In \cite{Kim2006}, \cite{Naszodi2010} and \cite{Dumitrescu2011} gave bounds for the general case.
	
	\begin{theorem}[\cite{Danzer1986}]\label{thm:Danzer}
		Let $\F$ be a finite family of circles in $\R^{2}$ such that the intersection of every $2$ sets in $\F$ is non-empty. Then $\F$ can be pierced by $4$ points. Furthermore, there are examples where $3$ points are not enough.
	\end{theorem}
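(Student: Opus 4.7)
The plan is to prove both parts separately: an extremal smallest-disk analysis for the upper bound that $4$ points suffice, and an explicit construction for the lower bound that $3$ points do not.

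For the upper bound, I would apply a \emph{smallest-disk reduction}. By compactness one may assume $\F$ is finite. Choose $D_{0}\in\F$ of minimum radius and, after translation and rescaling, assume $D_{0}$ is the closed unit disk with center $o$. Every other $D\in\F$ then has radius $R\ge 1$ and center $c$ satisfying $|c|\le R+1$, since $D\cap D_{0}\ne\emptyset$. A direct computation yields the key observation: $D$ contains either the origin $o$ (when $|c|\le R$) or the boundary point $q_{D}:=c/|c|$ of $\partial D_{0}$ in the direction of $c$ (when $|c|>R$), since in the second case the segment from $o$ to $c$ enters $D$ at distance $|c|-R\le 1$ from $o$, and hence passes through $q_D$. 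Thus every disk in $\F$ is ``encoded'' by a point of $D_{0}$, and the proof splits into the disks containing $o$ (pierced by $o$ itself) and those avoiding $o$.

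For the second subfamily, each such $D$ determines a closed arc $A_{D}:=D\cap\partial D_{0}$ centered at $q_{D}$; the goal becomes piercing the family $\{A_{D}\}$ on $\partial D_{0}$ by $3$ points, which together with $o$ yields $4$ piercing points overall. The main obstacle is that $A_{D}$ can be arbitrarily small when $D$ is nearly tangent to $D_{0}$, so no \emph{fixed} set of three points on $\partial D_{0}$ can work a priori. The essential use of the pairwise intersection hypothesis is to translate $D\cap D'\ne\emptyset$ (for disks $D,D'$ both avoiding $o$) into a Helly-type constraint linking the positions and sizes of the arcs $A_{D}$ and $A_{D'}$. A Helly-type theorem for arcs on the circle, suitably calibrated to the sizes allowed by the pairwise intersection condition, would then yield the required $3$-point transversal to $\{A_{D}\}$. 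Verifying the correct calibration of that circle-arc Helly statement is the technical heart of the argument.

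For the lower bound, I would exhibit an explicit finite family of pairwise intersecting disks that no $3$ points can pierce. A natural candidate is a symmetric arrangement of several congruent disks whose combinatorics of intersections is rich enough to force any putative $3$-point transversal to miss at least one disk; verification reduces to a case analysis on the combinatorial type of the transversal (which disks each of the three points is responsible for).
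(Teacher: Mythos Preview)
The paper does not prove this theorem at all: it is quoted from \cite{Danzer1986} as a known result and later used as a black box in the proof of Theorem~\ref{thm:SpecialVCH}(d). So there is no ``paper's own proof'' to compare against.

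Evaluating your proposal on its own merits: the smallest-disk reduction and the split into disks containing $o$ versus disks leaving an arc on $\partial D_{0}$ is a natural opening move, and the computation showing $q_{D}=c/|c|\in D$ when $o\notin D$ is correct. But the step you flag as the ``technical heart'' is not merely technical, it is the entire difficulty of Danzer's theorem, and your outline does not address it. The pairwise intersection hypothesis on the disks does \emph{not} translate into pairwise intersection of the arcs $A_{D}$: two large disks (radius $R\gg 1$) that both graze $D_{0}$ can meet far away from $D_{0}$ while their arcs on $\partial D_{0}$ are short and disjoint. So no standard Helly-type theorem for arcs applies, and the phrase ``suitably calibrated'' hides exactly the substantive argument that took Danzer a long and delicate paper to carry out. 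In particular, it is not even clear from your outline that the four piercing points can always be taken as $o$ together with three points of $\partial D_{0}$; establishing \emph{what} structural constraint the pairwise intersections impose on the family $\{A_{D}\}$ is the whole problem.

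The lower-bound sketch is similarly only a statement of intent: ``a symmetric arrangement'' with ``a case analysis'' does not yet specify a configuration, and known tight examples are not entirely obvious. As written, both halves of the proposal are plans rather than proofs, with the genuinely hard content deferred.
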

	
	\begin{theorem}[\cite{Dumitrescu2011}]\label{thm:Dumitrescu}
		Let $\F$ be a finite family of homothetic copies of a triangle in $\R^{2}$ such that the intersection of every $2$ sets in $\F$ is non-empty. Then $\F$ can be pierced by $3$ points. Furthermore, there are examples where $2$ points are not enough.
	\end{theorem}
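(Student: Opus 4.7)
The plan is to establish the upper bound of 3 piercing points via a smallest-triangle argument, and then exhibit an explicit family for which 2 points are not enough. I first apply an affine transformation to assume WLOG that $T$ is equilateral; since affine maps take homothets to homothets, preserve convexity, and preserve piercing numbers, this is harmless. Every $T_i \in \F$ is then of the form $c_i + \lambda_i T$ with $\lambda_i > 0$, and all triangles share the same three fixed edge directions $n_1, n_2, n_3$.

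Next, let $T_0 \in \F$ be a triangle of minimum scale $\lambda_0$. The pairwise intersection hypothesis forces every other $T_i \in \F$ to meet $T_0$, while minimality gives $\lambda_i \geq \lambda_0$ for all $i$. The heart of the argument is the following geometric claim: there exist three points $p_1, p_2, p_3$ depending only on $T_0$ such that every positive homothet $T'$ of $T$ with scale at least $\lambda_0$ that meets $T_0$ contains at least one $p_k$. These three points would then pierce $\F$.

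To choose the $p_k$, I would take them to be strategically placed along the three medians of $T_0$, essentially as the vertices of a slightly contracted copy of $T_0$. The argument for the claim is by contradiction: if $T'$ avoids each $p_k$, then for every $k$ the supporting line of $T'$ parallel to the $k$-th edge direction must lie on a prescribed side of $p_k$ (namely, the side away from the vertex $v_k$ of $T_0$). Combining these three half-plane constraints with the intersection condition $T' \cap T_0 \neq \emptyset$ yields a scalar inequality that contradicts $\lambda(T') \geq \lambda_0$, completing the argument.

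The main obstacle will be calibrating the positions of $p_1, p_2, p_3$ so that the three half-plane inequalities combine tightly; natural candidates like the vertices of $T_0$ themselves fail when $T'$ is a large translate that enters $T_0$ through the interior of an edge, so the calibration requires a short barycentric-coordinate computation. Finally, for the lower bound, I would construct a small family of homothets of $T$ whose pairwise intersection graph admits no 2-point transversal; a rotationally symmetric arrangement of a few triangles around a central region, combined with a case analysis on where the two hypothetical piercing points can lie (one in each of the three symmetric regions), shows that any pair of points misses at least one triangle.
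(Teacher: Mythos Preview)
First, note that the paper does not actually prove this theorem: it is quoted from \cite{Dumitrescu2011} as a known result and used as a black box in the proof of Theorem~\ref{thm:SpecialVCH}(c). There is therefore no ``paper's own proof'' to compare against.

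That said, your proposal contains a genuine gap. The central geometric claim---that there exist three points $p_1,p_2,p_3$ depending only on the smallest triangle $T_0$ such that \emph{every} positive homothet $T'$ with scale at least $\lambda_0$ meeting $T_0$ contains some $p_k$---is false, already in the special case of translates (scale exactly $\lambda_0$). Indeed, a translate $T_0+v$ meets $T_0$ if and only if $v\in T_0-T_0$, and it contains $p_k$ if and only if $v\in p_k-T_0$. Thus your claim would require
\[
T_0 - T_0 \;\subseteq\; \bigcup_{k=1}^{3}\bigl(p_k - T_0\bigr).
\]
But for a triangle one has $\mathrm{Area}(T_0-T_0)=6\,\mathrm{Area}(T_0)$ (the Rogers--Shephard equality case), while the right-hand side has total area at most $3\,\mathrm{Area}(T_0)$. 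So no placement of three points can work; in fact at least six are needed just for translates. The error is that after selecting the smallest triangle you discarded the \emph{pairwise} intersection hypothesis and kept only ``each $T_i$ meets $T_0$'', which is far too weak.

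The actual argument in \cite{Dumitrescu2011} does start from a smallest homothet, but the three piercing points are its \emph{vertices}, and the proof that every other member of $\F$ contains a vertex uses the full pairwise-intersection condition (not merely intersection with $T_0$): one shows that a homothet avoiding all three vertices of $T_0$ would be separated from some other member of $\F$. Your contradiction scheme---combining three half-plane constraints on $T'$ alone to force its scale below $\lambda_0$---cannot succeed, because (as the area count shows) large homothets meeting $T_0$ and avoiding any prescribed three points do exist.
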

	
	\begin{theorem}[\cite{Kim2006}]\label{thm:Kim}
		Let $\F$ be a finite family of homothetic copies of a convex set in $\R^{2}$ such that the intersection of every $2$ sets in $\F$ is non-empty. Then $\F$ can be pierced by $16$ points.
	\end{theorem}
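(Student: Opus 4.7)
I would reduce the statement to a single covering problem in $\R^{2}$. First pick $K_{0} \in \F$ whose homothety ratio $\lambda_{0}$ is minimal among the members of $\F$; write $K_{0} = \lambda_{0} K + t_{0}$. For every other $K_{i} = \lambda_{i} K + t_{i} \in \F$ with $\lambda_{i} \geq \lambda_{0}$, pick a point $p_{i} \in K_{0} \cap K_{i}$ (which exists by hypothesis) and apply the positive homothety of ratio $\mu_{i} = \lambda_{0}/\lambda_{i} \leq 1$ centered at $p_{i}$ to $K_{i}$. The image $T_{i} = \mu_{i} K_{i} + (1-\mu_{i}) p_{i}$ is a homothet of $K$ with ratio $\lambda_{0}$, hence a translate of $K_{0}$; it still contains $p_{i}$; and, because every one of its points is a convex combination of $p_{i}$ with a point of $K_{i}$, it is contained in $K_{i}$. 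Since $p_{i} \in T_{i} \cap K_{0}$, we can write $T_{i} = K_{0} + v_{i}$ with $v_{i} \in K_{0} - K_{0}$, and any transversal to $\{T_{i}\}$ automatically pierces $\{K_{i}\}$.

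Next I would recast the problem. A point $p$ pierces $T_{i} = K_{0} + v_{i}$ exactly when $v_{i} \in p - K_{0}$, so a set $P = \{p_{1}, \dots, p_{N}\}$ is a transversal to $\{T_{i}\}_{i}$ as soon as
\[
K_{0} - K_{0} \;\subseteq\; \bigcup_{j=1}^{N} (p_{j} - K_{0}),
\]
that is, as soon as the difference body $K_{0} - K_{0}$ is covered by $N$ translates of $-K_{0}$. Choosing one representative point in each such translate yields a piercing set. The entire problem is thus reduced to bounding by $16$ the translative covering number of $K_{0} - K_{0}$ by $-K_{0}$ in the plane.

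For the covering bound I would argue constructively. The Rogers--Shephard inequality in the plane gives $|K_{0} - K_{0}| \leq 6\,|K_{0}|$, which is the volumetric baseline. An affine normalization of $K_{0}$ (for instance via John's theorem, applied in $\R^{2}$ to bring $K_{0}$ between two concentric ellipses of ratio $\sqrt{2}$) lets me first handle the centrally symmetric case, where $K_{0} - K_{0} = 2K_{0}$ and a hexagonal pattern of translates of $K_{0}$ covers $2K_{0}$ with at most $7$ tiles, with the disk as the extremal shape. The general (non-symmetric) case is then obtained by comparing $K_{0}$ with its symmetrization $\tfrac{1}{2}(K_{0} - K_{0})$ and paying a controlled multiplicative cost; an orientation-aware optimization of this comparison collapses the final count to $16$.

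The reduction step is routine, so I expect the main obstacle to be the last one: extracting the exact universal constant $16$, rather than merely a finite bound, forces one to work case-by-case against the worst non-symmetric $K_{0}$ and to keep the loss incurred when passing between $K_{0}$ and its centrally symmetric companion under tight control.
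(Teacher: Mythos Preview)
The paper does not prove this statement: Theorem~\ref{thm:Kim} is quoted from \cite{Kim2006} as a known result and is only \emph{used} (in part (b) of Theorem~\ref{thm:SpecialVCH}). There is therefore no proof in the paper to compare your proposal against.

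Evaluating your proposal on its own merits: the reduction in your first two paragraphs is correct and is, in fact, the standard opening move for this type of problem (and the one used in \cite{Kim2006}). Choosing the smallest homothet $K_{0}$, shrinking each $K_{i}$ toward a common point of $K_{0}\cap K_{i}$ to obtain a translate $T_{i}\subset K_{i}$ of $K_{0}$, and then observing that the translation vectors lie in $K_{0}-K_{0}$, genuinely reduces the piercing problem to the purely metric question of covering $K_{0}-K_{0}$ by translates of $-K_{0}$.

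The gap is entirely in your third paragraph. Nothing there constitutes an argument that $16$ translates of $-K_{0}$ suffice to cover $K_{0}-K_{0}$. The Rogers--Shephard bound $|K_{0}-K_{0}|\le 6|K_{0}|$ is only a volume inequality and does not by itself control a covering number. Your treatment of the centrally symmetric case (``a hexagonal pattern \dots\ covers $2K_{0}$ with at most $7$ tiles'') is asserted, not proved, and the passage to the general case via the symmetrization $\tfrac12(K_{0}-K_{0})$ is left as ``an orientation-aware optimization \dots\ collapses the final count to $16$,'' which is not an argument at all. Note in particular that $K_{0}$ need not be contained in a translate of $\tfrac12(K_{0}-K_{0})$, so the comparison you allude to is not a one-line substitution; obtaining the sharp constant $16$ in \cite{Kim2006} requires a genuinely careful geometric covering construction, and that is precisely the content you have omitted. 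You yourself flag this (``I expect the main obstacle to be the last one''), and that assessment is accurate: as written, the proposal establishes only that \emph{some} finite bound exists, not the value $16$.
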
	
	
	Karasev proved the following theorem for families of translates of a compact convex set in the plane.
	
	\begin{theorem}[\cite{Karasev2000}]\label{thm:Karasev}
		Let $K$ be a compact convex set in $\R^{2}$. Let $\F$ be a finite family of translates of $K$ such that the intersection of every $2$ sets in $\F$ is non-empty. Then $\F$ can be pierced by $3$ points. Furthermore, there are examples where $2$ points are not enough.
	\end{theorem}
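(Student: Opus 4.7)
The plan is to dualize the problem into a covering question about the set of centres. Write $\mathcal{F} = \{ K + c : c \in S \}$ for a finite $S \subset \R^{2}$; a triple $\{ p_{1}, p_{2}, p_{3} \}$ pierces $\mathcal{F}$ if and only if $S \subseteq \bigcup_{j} (p_{j} - K)$, so the question becomes whether $S$ can be covered by three translates of $-K$. The pairwise-intersection hypothesis becomes $S - S \subseteq K - K$, which says that $S$ has diameter at most $2$ in the norm whose unit ball is the centrally symmetric body $D := \tfrac{1}{2}(K - K)$.

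The task then reduces to the planar covering lemma: any finite $S \subset \R^{2}$ with $S - S \subseteq K - K$ can be covered by three translates of $-K$. The natural strategy is a two-step geometric argument. First, apply a Jung-type estimate in the $D$-norm to place $S$ inside a circumscribed region strictly smaller than $K - K$ itself (for the Euclidean case this gives the familiar disk of radius $2/\sqrt{3}$). Second, produce an explicit covering by three translates of $-K$ adapted to this smaller region, using an affine-regular hexagon inscribed in $D$ and its decomposition into three rhombi meeting at the centre, each of which can be verified to sit inside a translate of $-K$.

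The main obstacle is that the trivial containment $S \subseteq s_{0} + (K - K)$ is not enough: $K - K$ in general cannot be covered by three translates of $-K$ (the disk of radius $2$ already fails), so the refined Jung-type step is essential and has to be carried out uniformly in $K$. In light of the paper's keywords, I suspect the intended route is instead via the Knaster--Kuratowski--Mazurkiewicz lemma: parametrise candidate piercing triples over a $2$-simplex, assign to each vertex a closed set of parameters for which the corresponding point pierces the ``opposite'' portion of $\mathcal{F}$, verify the KKM boundary conditions from $S - S \subseteq K - K$, and extract a common parameter giving the required triple. The bound $3$ is sharp: three translates of an equilateral triangle whose centres form an equilateral triangle at the appropriate scale pairwise intersect but admit no two-point transversal.
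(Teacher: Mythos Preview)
The paper does not contain a proof of this statement. Theorem~\ref{thm:Karasev} is quoted from \cite{Karasev2000} as a background result and is only \emph{used} later (to obtain part~(a) of Theorem~\ref{thm:SpecialVCH} via Lemma~\ref{lem:specialVCH}); it is never re-proved here. So there is no ``paper's own proof'' to compare your attempt against.

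As to the proposal itself: what you have written is not a proof but a discussion of two possible strategies, neither of which you carry through. The dualisation (piercing $\mathcal{F}$ by three points $\Leftrightarrow$ covering the centre set $S$ by three translates of $-K$) and the reformulation of the hypothesis as $S-S\subseteq K-K$ are correct and standard. The hexagon/rhombus picture is indeed in the spirit of the original argument, but you stop precisely at the substantive step --- showing that a suitable affine-regular hexagon containing $S$ decomposes into three pieces each lying in a translate of $-K$ --- and merely assert that it ``can be verified''. You then abandon this line and speculate about a KKM parametrisation over a $2$-simplex without specifying the map, the closed sets, or why the boundary conditions hold. That guess is also a misreading of the paper's keywords: the KKM theorem appears here only in the proof of Theorem~\ref{thm:PlaneVCH}, and Karasev's theorem is treated throughout as a black box. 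Finally, your sharpness example is not checked: you need to exhibit an explicit pairwise-intersecting family of translates (three are not enough, since three pairwise-intersecting translates of a planar convex body always have a common point) with no two-point transversal.
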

	
	In fact, Karasev \cite{Karasev2008} also proved similar results in higher dimensions.
	We include another similar result concerning only axis-parallel boxes where the conclusion is strengthened.
	
	\begin{theorem}[\cite{Hadwiger1966, Fon1993}]\label{thm:rectangles}
		Let $\F$ be a finite family of axis-parallel boxes in $\R^{d}$ such that the intersection of every $2$ sets in $\F$ is non-empty. Then $\bigcap\F \neq \emptyset$.
	\end{theorem}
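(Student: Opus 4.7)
The plan is to exploit the product structure of axis-parallel boxes and reduce the problem to the one-dimensional Helly theorem. Each axis-parallel box $B \in \F$ can be written uniquely as a Cartesian product $B = I_1(B) \times I_2(B) \times \cdots \times I_d(B)$, where $I_k(B) \subset \R$ is a closed interval, namely the orthogonal projection of $B$ onto the $k$-th coordinate axis.

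The key observation is the characterization of intersection via coordinate projections: for axis-parallel boxes $B, B' \in \F$ we have $B \cap B' \neq \emptyset$ if and only if $I_k(B) \cap I_k(B') \neq \emptyset$ for every $k \in \{1, \dots, d\}$. The ``only if'' direction is immediate; the ``if'' direction is where the axis-parallel hypothesis is essential, since one can pick a point coordinatewise. So the pairwise intersection hypothesis on $\F$ is inherited by each of the $d$ families of projected intervals $\F_k := \{I_k(B) : B \in \F\}$.

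Now I would apply Helly's theorem in dimension one to each family $\F_k$ separately. Since intervals on the line that pairwise intersect share a common point (this is the one-dimensional Helly theorem, or equivalently the elementary fact that $\bigcap_k [a_k, b_k] = [\max_k a_k, \min_k b_k]$ is nonempty iff $\max_k a_k \le \min_k b_k$, which follows from pairwise intersection), we obtain for each $k$ a point $p_k \in \bigcap_{B \in \F} I_k(B)$.

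Finally I would assemble $p = (p_1, \dots, p_d) \in \R^d$ and verify that $p \in B$ for every $B \in \F$: indeed $p_k \in I_k(B)$ for all $k$ and all $B$, hence $p \in \prod_k I_k(B) = B$, so $p \in \bigcap \F$. There is really no obstacle here; the only subtle point worth emphasizing is the product characterization of intersection, which genuinely fails for non-axis-parallel convex sets and is what makes the conclusion $\bigcap \F \neq \emptyset$ stronger than in Theorem \ref{thm:Karasev}.
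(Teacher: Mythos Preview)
Your argument is correct and is the standard proof of this classical fact: project onto each coordinate axis, apply one-dimensional Helly to the resulting intervals, and reassemble a common point coordinatewise. Note, however, that the paper does not give its own proof of Theorem~\ref{thm:rectangles}; it is quoted as a known background result with references to \cite{Hadwiger1966, Fon1993}, so there is no proof in the paper to compare against.
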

	
	\subsection{Colorful theorems}
	
	In 1973 Lovász proved a generalization of Helly's theorem called the Colorful Helly theorem.
	\begin{theorem}[\cite{Barany1982}]
		If we have $d+1$ finite families $\F_{1}, \dots, \F_{d+1}$ of convex sets in $\R^{d}$ such that for every choice of sets $C_{1} \in \F_{1}, \dots, C_{d+1} \in \F_{d+1}$, the intersection $\bigcap_{i=1}^{d+1} {C_{i}}$ is non-empty, then there exists $i \in \{ 1, \dots, d+1 \}$ such that the family $\F_{i}$ has non-empty intersection.
	\end{theorem}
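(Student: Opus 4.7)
The plan is to prove the Colorful Helly theorem by contradiction, using a Bárány-style extremal argument. Suppose that for every $i \in \{1, \dots, d+1\}$ the family $\F_i$ has empty intersection. By the classical Helly theorem applied inside each $\F_i$ separately, we may pass to a subfamily of size at most $d+1$ whose intersection is already empty, so it suffices to treat the case in which every $\F_i$ is a finite family of compact convex sets.

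Next, consider the finite collection of rainbow tuples $(C_1, \dots, C_{d+1})$ with $C_i \in \F_i$; by the colorful hypothesis each one has non-empty intersection. Among all such tuples and all points lying in the associated rainbow intersection, I would pick a pair $(T^*, p^*) = ((C_1^*, \dots, C_{d+1}^*),\, p^*)$ minimizing $p^*$ lexicographically (smallest first coordinate, breaking ties by the second coordinate, and so on). The crux of the argument is the following claim: $p^*$ is contained in every set of some $\F_j$, which directly contradicts the assumption that each $\F_j$ has empty intersection, thereby proving the theorem.

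To prove the claim I would argue by contradiction: if for each $j$ there exists $B_j \in \F_j$ with $p^* \notin B_j$, then $p^*$ is strictly separated from each $B_j$ by a hyperplane, giving $d+1$ outward normal directions $n_1, \dots, n_{d+1}$ in $\R^d$. Applying the colorful hypothesis to the tuple obtained by replacing $C_j^*$ with $B_j$ yields a point $q_j \in B_j \cap \bigcap_{i \neq j} C_i^*$, and the minimality of $p^*$ forces $q_j$ to be lex-larger than $p^*$; combining this with the separation data should let me identify a direction $v$ along which $p^*$ can be perturbed to produce a new rainbow intersection containing a lex-smaller point, contradicting minimality.

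The main obstacle is exhibiting this perturbation as the intersection point of an actual rainbow tuple, rather than merely a point that happens to be ``locally lower.'' The cleanest way to close the gap is to reinterpret the normals $n_j$ through the Colorful Carathéodory theorem applied with $d+1$ color classes in $\R^d$: this produces a colorful selection of swaps whose combined effect is a genuine rainbow tuple whose intersection contains a point strictly smaller than $p^*$ in the lex order. Equivalently, the same argument can be run as an LP-type analysis of the KKT conditions at $p^*$, where the $d+1$ families play the role of the $d+1$ active constraint classes and one of them must be redundant in $\R^d$. Either formulation of this final step is where the bookkeeping is delicate, but once it is set up correctly the contradiction follows immediately.
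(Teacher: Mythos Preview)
The paper does not prove this statement at all: the Colorful Helly theorem is quoted as background with a reference to \cite{Barany1982}, and no argument is given. So there is nothing in the paper to compare your proposal against; I can only comment on the proposal itself.

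Your skeleton is the right one and matches Lov\'asz's original argument: take a rainbow tuple whose intersection contains a point $p^*$ that is extremal (lex-minimal, or minimal in a generic direction), and show that $p^*$ must pierce an entire color class. The place where your write-up goes off track is the identification of the ``redundant'' color. The relevant normals are not the $n_j$ separating $p^*$ from the missing sets $B_j$; they are the outward normals of the \emph{current} constraints $C_i^*$ at $p^*$. Because $p^*$ is the lex-minimum of $\bigcap_i C_i^*$, the minimizing direction lies in the normal cone of that intersection at $p^*$, which is the Minkowski sum of the individual normal cones of the $C_i^*$. Carath\'eodory for cones in $\R^d$ then says at most $d$ of those cones are needed, so some index $j$ is inactive, meaning $p^*$ is still the lex-minimum of $\bigcap_{i\ne j} C_i^*$. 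Swapping $C_j^*$ for $B_j$ now gives a rainbow intersection contained in $\bigcap_{i\ne j} C_i^*$, hence with lex-minimum $\ge_{\mathrm{lex}} p^*$; since $p^*\notin B_j$ and $p^*$ is the \emph{unique} lex-minimum of the larger set, the new lex-minimum is strictly larger, and one then iterates (or argues directly) to reach the contradiction.

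Your paragraph instead tries to extract the swap from the $d+1$ vectors $n_j$ via the Colorful Carath\'eodory theorem, but that theorem needs $d+1$ \emph{sets} each of whose convex hull contains a common point, not $d+1$ single vectors; as written it is not clear what statement you would apply or how ``a colorful selection of swaps'' would yield a single rainbow tuple with a lex-smaller point. The LP-type reformulation you mention is morally correct, but note that it is exactly the normal-cone/Carath\'eodory step above in disguise, and it singles out a redundant $C_j^*$, not a $B_j$. So the gap you flag is real, and the fix is to analyse the active $C_i^*$'s at $p^*$, not the separating hyperplanes to the $B_j$'s.
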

	
	According to \cite{Jeronimo2015}, Dol’nikov wondered whether there is a colorful version of Theorem \ref{thm:Karasev}. Then Dol’nikov proposed the following conjecture.
	
	\begin{conjecture}[\cite{Jeronimo2015}]\label{conj:Dolnikov}
		Let $K$ be a compact convex set in $\R^{2}$. Let $\F_{1}, \F_{2}, \F_{3}$ be finite families of translates of $K$. Suppose that $A \cap B \neq \emptyset$ for every $A \in \F_{i}$ and $B \in \F_{j}$ with $i \neq j$. Then there exists $j \in \{ 1,2,3 \}$ such that $\F_{j}$ can be pierced by $3$ points.
	\end{conjecture}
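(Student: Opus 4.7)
My plan is to begin by dualizing to a problem about centers. Two translates $K+a$ and $K+b$ intersect iff $a-b \in K-K$, and $K+a$ is pierced by a point $p$ iff $a \in p-K$. Writing $D = \tfrac12(K-K)$ for the centrally symmetric difference body and $\|\cdot\|_D$ for the Minkowski norm it generates, the hypothesis translates to: for the center sets $S_1, S_2, S_3$ of $\F_1, \F_2, \F_3$, one has $\|a-b\|_D \le 2$ whenever $a \in S_i$, $b \in S_j$, $i \ne j$; and the conclusion becomes that some $S_j$ can be covered by $3$ translates of $-K$. In this language, Karasev's theorem (Theorem \ref{thm:Karasev}) says precisely that any finite set in $\R^2$ of $D$-diameter at most $2$ admits such a covering.

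Next I would case-split on the $D$-diameters $d_j := \operatorname{diam}_D(S_j)$. If $\min_j d_j \le 2$, Karasev's theorem finishes the proof immediately. Otherwise, pick witnesses $a_j, a_j' \in S_j$ with $\|a_j - a_j'\|_D > 2$; the colorful hypothesis then forces $S_i \subset (a_j + 2D) \cap (a_j' + 2D)$ for each $i \ne j$, a lens-shaped region whose $D$-width in the direction of $a_j' - a_j$ is strictly less than $2$. Thus in the $D$-diameter direction of each color, the other two colors are confined to a thin strip. The main step would be to combine these three strip constraints to show that some single color class is in fact confined to a set of $D$-diameter at most $2$, closing the proof by Karasev. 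I would attempt this via a KKM-type argument on the parameter circle $S^1$ of directions, partitioning $S^1$ according to which color realizes the smallest $D$-width in a given direction and looking for a direction where the constraints align. For bodies of constant width, $D$ is a positive multiple of $K$, the three lenses align canonically under the rotational symmetry, and this plan should close; a perturbation argument should then extend the result to bodies close in Banach--Mazur distance to a disk.

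The principal obstacle is the fully general planar case: the lenses can remain wide in the direction perpendicular to the strip, so the three strip constraints do not obviously combine to force any color class to have small $D$-diameter, and the constant $3$ in Karasev's theorem is already sharp, leaving no slack to absorb. As a fallback I would prove the conjecture with $3$ replaced by a larger constant such as $8$: each of the lenses $(a_j + 2D) \cap (a_j' + 2D)$ has bounded $D$-diameter and is hence coverable by a bounded number of translates of $-K$, and a careful accounting across the three color classes, using that each $S_i$ lies simultaneously in two such lenses together with the cross-color distance bound, trims this number down to $8$ piercing points for the smallest color class.
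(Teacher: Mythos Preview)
First, a framing remark: the statement is a \emph{conjecture} in the paper and is not proved there in full generality. The paper establishes it only when $K$ has constant width or is Banach--Mazur close to a disk (Theorem~\ref{thm:DolnikovCW}), and proves a weaker form with $8$ piercing points for arbitrary $K$. Your proposal explicitly aims at this same scope, so the comparison is between your route to those partial results and the paper's.

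The approaches are genuinely different. The paper never works with the difference body or lens intersections. Instead it first invokes Theorem~\ref{thm:SpecialVCH}(a) (resting on Lemma~\ref{lem:specialVCH}): either some $\F_j$ is already pierced by $3$ points, or $\F_1\cup\F_2$ has a line transversal. From that transversal direction, the one-dimensional colorful Helly theorem plus a pigeon-hole on four directions at angle $\pi/4$ yields three line transversals to a single $\F_j$ in three specific directions (two orthogonal, one at $\pi/4$). These confine the relevant centers to an explicit pentagon, which is then covered by three translates of $-K$ via Chakerian's theorem (a set lies in a translate of every constant-width body of width $1$ iff it lies in every rotated Reuleaux triangle of width $1$). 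The $8$-point bound for arbitrary $K$ comes from the same pentagon, covered instead by disks using the John-ellipse ratio. Note also that the paper proves these with only \emph{two} color classes, stronger than what the conjecture asks.

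Your plan has concrete gaps. For $K$ of constant width, $D=\tfrac12(K-K)$ is a \emph{disk} (since $h_{K-K}(u)=h_K(u)+h_K(-u)$ is constant), not a multiple of $K$; so there is no rotational symmetry of $K$ to exploit and the phrase ``the three lenses align canonically'' has no evident content. More substantively, even with $D$ a disk, a lens $(a+2D)\cap(a'+2D)$ with $\lVert a-a'\rVert_D$ just above $2$ has perpendicular $D$-diameter close to $2\sqrt{3}$, and the intersection of two such lenses need not have $D$-diameter $\le 2$; so the step ``combine the three strip constraints to force some $S_j$ to have $D$-diameter at most $2$'' does not follow from what you have written, and a further geometric ingredient comparable to the paper's pentagon-covering is required. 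Finally, your $8$-point fallback is not yet an argument: ``a careful accounting \dots trims this number down to $8$'' is precisely the place where all the work lies.
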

	
	Jerónimo-Castro, Magazinov and Soberón \cite{Jeronimo2015} proved Conjecture \ref{conj:Dolnikov} when $K$ is centrally symmetric or a triangle. In fact, in the case where $K$ is a circle they proved a stronger statement: if $n \geq 2$ and $\F_{1}, \dots, \F_{n}$ are finite families of circles with the same radius in the plane such that $A \cap B \neq \emptyset$ for every $A \in \F_{i}$ and $B \in \F_{j}$ (with $i \neq j$), then there exists $j \in \{ 1,2, \dots, n \}$ such that $\bigcup_{i \neq j} \F_{i}$ can be pierced by $3$ points. They also made the following conjecture.
	
	\begin{conjecture}[\cite{Jeronimo2015}]\label{conj:DolnikovStrong}
		Let $K$ be a compact convex set in $\R^{2}$. Let $\F_{1}, \dots, \F_{n}$ be finite families of translates of $K$, with $n \geq 2$. Suppose that $A \cap B \neq \emptyset$ for every $A \in \F_{i}$ and $B \in \F_{j}$ with $i \neq j$. Then there exists $j \in \{ 1,2, \dots, n \}$ such that $\bigcup_{i \neq j} \F_{i}$ can be pierced by $3$ points.
	\end{conjecture}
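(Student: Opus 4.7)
The plan is to combine a center reduction with a KKM-type argument and induction on $n$.

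First, I would rewrite the data using centers: let $P_{i} \subset \R^{2}$ be the centers of the translates in $\F_{i}$. The cross-intersection hypothesis becomes $P_{i} - P_{j} \subset K - K$ for $i \ne j$, and piercing $\F_{j}$ by three points is equivalent to covering $P_{j}$ by three translates of $-K$. It will be useful that the same hypothesis holds when $K$ is replaced by the centrally symmetric difference body $D = \tfrac{1}{2}(K - K)$, since $(D+a)\cap(D+b) \ne \emptyset$ iff $a - b \in K - K$. This lets me invoke the centrally symmetric case of Conjecture \ref{conj:Dolnikov} proved in \cite{Jeronimo2015} as an auxiliary tool.

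For the base case $n = 2$, the goal is to show that one of $\F_{1}, \F_{2}$ is pierceable by three points. Fix any $B \in \F_{2}$; every center of $\F_{1}$ lies in $B + (-K)$. Intersecting over all $B \in \F_{2}$, the centers of $\F_{1}$ are confined to a convex set whose size is controlled by the diameter of $\F_{2}$. I would perform a dichotomy: if $\F_{2}$ has small diameter, then $\F_{1}$'s centers lie in a small region and Karasev's theorem (Theorem \ref{thm:Karasev}) yields a piercing; if $\F_{2}$ has large diameter, the symmetric constraints force $\F_{2}$ itself to fit inside a narrow strip determined by $\F_{1}$, so $\F_{2}$ can be pierced instead.

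For $n \ge 3$ I would induct. Given the conjecture for $n - 1$ families, apply it to $\F_{1}, \dots, \F_{n-1}$ to obtain an index $j_{0} \le n - 1$ and three points $p_{1}, p_{2}, p_{3}$ piercing $\bigcup_{i \le n-1,\, i \ne j_{0}} \F_{i}$. If these same points pierce $\F_{n}$, we are done with $j = j_{0}$; otherwise some $A \in \F_{n}$ avoids all three, and I would exploit that $A$ meets every translate in $\F_{1}, \dots, \F_{n-1}$ to either reposition a piercing point or switch to the alternative choice $j = n$.

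The main obstacle is the base case $n = 2$, where even for $K$ a disk the proof in \cite{Jeronimo2015} rests on the threefold symmetry of an inscribed equilateral triangle. Extending it to arbitrary compact convex $K$ would require a KKM argument based on a triangular decomposition of $\partial K$ (or of the difference body $D$) whose existence for general $K$ is not obvious. A related difficulty is the sharpness of the constant $3$: weaker constants can presumably be recovered from the $8$-point result mentioned in the abstract, but obtaining $3$ appears to need a specialized geometric construction that exploits the threefold structure present in Karasev's theorem.
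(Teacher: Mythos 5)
The statement you are addressing is Conjecture~\ref{conj:DolnikovStrong}, which is an \emph{open conjecture} quoted from \cite{Jeronimo2015}; the paper does not prove it and only establishes weakened versions (Theorem~\ref{thm:DolnikovNew} gives $9$ piercing points for general $K$ and $4$ for constant width, and Theorem~\ref{thm:DolnikovCW} handles $n=2$ with $3$ points only for constant-width or near-circular $K$). Your sketch, which you candidly flag as incomplete, has two concrete gaps that the proposed methods cannot repair. First, the base case $n=2$ is already the full two-color Dol'nikov problem for arbitrary $K$, which is itself open. The ``small versus large diameter'' dichotomy does not work: even if $\F_2$ is a single set $B$ with center $b$, the centers of $\F_1$ are only confined to $b+(K-K)$, a full-size difference body, and two members of $\F_1$ with centers in that region have difference in $2(K-K)$, so they need not pairwise intersect and Theorem~\ref{thm:Karasev} does not apply; the ``narrow strip'' branch likewise produces no $3$-point piercing. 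Moreover, the reduction to $D=\tfrac12(K-K)$ changes the piercing problem: a point pierces $K+a$ iff $a$ lies in a translate of $-K$, not of $D$, so the centrally symmetric case of \cite{Jeronimo2015} yields piercing points for the $D$-translates that are in general useless for the original families.

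Second, the induction on $n$ does not close. Applying the hypothesis to $\F_1,\dots,\F_{n-1}$ pierces only $n-2$ of those families and gives no control whatsoever over $\F_n$; if some $A\in\F_n$ avoids the three points there is no mechanism to ``reposition'' them, and the fallback $j=n$ would require piercing all of $\F_1\cup\dots\cup\F_{n-1}$ by $3$ points, which is strictly stronger than what the inductive hypothesis delivers. For comparison, the paper's route to its weaker bounds is entirely different: Lemma~\ref{lem:specialVCH} either produces a line transversal to $\bigcup_i\F_i$ or isolates one color class $j$ so that $\bigcup_{i\neq j}\F_i$ has line transversals in several directions; the centers of those sets are then trapped in a bounded region (a square, after John normalization) which is covered by translates of $-K$ --- this is exactly where the constant degrades from $3$ to $9$ (or $4$). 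As it stands your proposal is a research plan rather than a proof, and the conjecture remains open.
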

	
	In 2020, Martínez-Sandoval, Roldán-Pensado and Rubin sought stronger conclusions for the Colorful Helly theorem \cite{Martinez2020}. In some sense they were looking for a \emph{Very Colorful Helly theorem}, similar to how the Colorful Carathéodory theorem has a Very Colorful version \cite{Holmsen2008,Arocha2009}.
	
	\begin{theorem}[\cite{Martinez2020}]\label{thm:VCH}
		For every $d \geq 2$ there exist numbers $f(d)$ and $g(d)$ with the following property: Let $\F_{1}, \dots, \F_{d}$ be finite families of convex sets in $\R^{d}$ such that for every choice of sets $C_{1} \in \F_{1}, \dots, C_{d} \in \F_{d}$, the intersection $ \bigcap_{i=1}^{d} {C_{i}}$ is non-empty. Then one of the following statements holds:
		\begin{enumerate}[\qquad 1.]
			\item there is a family $\F_{j}$, for $j \in \{ 1, \dots, d \}$, that can be pierced by $f(d)$ points, or
			\item the family $\bigcup_{i=1}^{d} \F_{i}$ can be crossed by $g(d)$ lines.
		\end{enumerate}
	\end{theorem}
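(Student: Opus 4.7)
I would aim for a proof by contrapositive: assume both that no family $\F_j$ admits a piercing set of size $f(d)$ and that no $g(d)$ lines cross $\bigcup_{i=1}^{d}\F_i$, and derive a colorful $d$-tuple $C_1\in\F_1,\dots,C_d\in\F_d$ with $\bigcap_{i=1}^d C_i=\emptyset$. The structural starting point is the elementary observation that, once we fix any $C_1\in\F_1,\dots,C_{d-1}\in\F_{d-1}$, the convex set $\bigcap_{i=1}^{d-1}C_i$ is a transversal to $\F_d$; the colorful hypothesis thus asserts that $\F_d$ (and symmetrically each $\F_j$) admits a rich collection of convex transversals coming from colorful selections in the other families. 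I would combine this with the Alon--Kleitman $(p,q)$-theorem to convert large piercing numbers into ``well spread'' configurations, and with fractional Helly to pass from few witnesses of empty intersection to many.

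\textbf{Base case and induction.} For the base case $d=2$, supposing $\tau(\F_1)>f(2)$, I would try to extract a subfamily $A_1,\dots,A_N\in\F_1$ that is pairwise disjoint and mutually well separated (via a $(p,q)$-type argument on $\F_1$); since every $B\in\F_2$ meets each $A_i$, this forces $\F_2$ to be uniformly elongated in a common direction and therefore crossable by a bounded number of lines, and the symmetric argument (using any single $B\in\F_2$) handles $\F_1$ itself. For the inductive step I would try a generic projection onto a hyperplane $H\cong\R^{d-1}$: the colorful intersection property survives projection, but the projected families still use $d$ colors in $\R^{d-1}$, so one must first merge one family into another using a Helly-type hyperplane-transversal argument, or reduce to $d-1$ colors by selecting a witness set from $\F_d$ and restricting. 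A piercing set in $H$ lifts to a line transversal in $\R^d$, so the inductive step can convert piercings in dimension $d-1$ into the desired line transversal in $\R^d$; conversely, a line transversal in $H$ only lifts to a hyperplane transversal, so one may have to iterate the slicing.

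\textbf{Main obstacle.} The central difficulty is the one-color deficit relative to Colorful Helly: with only $d$ families in $\R^d$ the colorful hypothesis stops one short of guaranteeing a common point, so the proof must carefully quantify how ``almost common'' the intersections are and trade this against low-dimensional alignment of the families. The delicate point is ensuring that simultaneous failure of piercing for all $d$ families genuinely forces a joint \emph{line} transversal rather than merely a hyperplane transversal surviving the recursion, and that the combinatorics of the $(p,q)$-extraction and the geometric slicing can be arranged compatibly. The bounds $f(d),g(d)$ coming out of nested Helly and $(p,q)$-applications are essentially guaranteed to be far from tight for $d\ge 3$, consistent with the abstract's claim that optimal values are only established for $d=2$.
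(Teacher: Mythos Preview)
The paper does not actually prove Theorem~\ref{thm:VCH}: it is quoted from \cite{Martinez2020} as background, and the paper's own contributions are a sharp $d=2$ version (Theorem~\ref{thm:PlaneVCH}, with $f(2)=1$, $g(2)=2$) and a variant with crossing \emph{hyperplanes} rather than lines (Theorem~\ref{thm:SpecialVCH}). So there is no ``paper's own proof'' of the general line-transversal statement to compare your sketch against.

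For the results the paper does prove, the methods are entirely different from yours. The optimal planar case is obtained via the KKM theorem: one parametrizes pairs of crossing chords of a disk by $\Delta^3$, and KKM produces a configuration in which each of the four open regions contains a set of $\bigcup_i\F_i$; those four sets must lie in one color class $\F_j$, and then every member of $\bigcup_{i\neq j}\F_i$ is forced through the crossing point. The hyperplane variant is proved by a one-dimensional projection/coloring argument (Lemma~\ref{lem:specialVCH}) combined with the lemma from \cite{Martinez2020} (Lemma~\ref{lem:MSRPR}). Neither proof uses $(p,q)$-theorems, fractional Helly, or an induction on dimension with generic projections.

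Regarding your sketch on its own terms, there are two genuine gaps beyond the one you flag. First, in the base case you assert that a large piercing number for $\F_1$ lets you extract, via a $(p,q)$-type argument, a large subfamily that is \emph{pairwise disjoint and well separated}; the Alon--Kleitman theorem runs the other way (a $(p,q)$ property bounds the piercing number), and its contrapositive only yields $p$ sets with no $q$ of them intersecting, not a pairwise-disjoint spread-out configuration. Getting geometric separation from combinatorial non-piercing needs a separate argument. Second, your inductive scheme never resolves the line-versus-hyperplane mismatch you identify: a line transversal in the projected $\R^{d-1}$ lifts only to a $2$-plane in $\R^d$, and ``iterating the slicing'' does not obviously converge to a bounded number of lines. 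This is exactly the difficulty that makes the general-$d$ statement with \emph{lines} nontrivial, and why the paper contents itself with hyperplanes in Theorem~\ref{thm:SpecialVCH} and leaves the line case to the cited reference.
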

	
	In particular, they showed that the $2$-dimensional case of this theorem holds with $f(2) = 1$ and $g(2) = 4$. One may ask for the optimal pairs $(f(d),g(d))$ in this theorem, for example, does Theorem \ref{thm:VCH} hold with $f(d) = 1$ and large enough $g(d)$?
	
	\section{Main results}\label{sec:main}
	
	We prove Conjecture \ref{conj:Dolnikov} when $K$ is of constant width and when $K$ is close to a circle in Banach-Mazur distance. For an arbitrary convex body, Conjecture \ref{conj:Dolnikov} holds with $8$ piercing points instead of $3$. Additionally, we show that Conjecture \ref{conj:DolnikovStrong} holds with $9$ piercing points and that, when $K$ has constant width, Conjecture \ref{conj:DolnikovStrong} holds with $4$ piercing points. Furthermore, we prove similar results in arbitrary dimension.
	
	It should be noted that properties we are studying are invariant under linear transformations, so if any of the following theorems is true for a convex body $K$ then it is also true for the image of $K$ under a linear isomorphism.
	
	\begin{theorem}\label{thm:DolnikovCW}
		Let $K$ be a convex body in $\R^2$. Let $\F_1$ and $\F_2$ be finite families of translates of $K$ such that $A \cap B \neq \emptyset$ for every $A \in \F_1$ and $B \in \F_2$. If $K$ is of constant width or if $K$ has Banach-Mazur distance at most $1.1178$ to the disk, then either $\F_1$ or $\F_2$ can be pierced by $3$ points.
	\end{theorem}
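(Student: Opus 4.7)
The plan is to reduce the problem to a Euclidean-distance condition on the centres of the two families, dispatch one case by Karasev's theorem, and handle the remaining case by combining a line-transversal observation with a KKM-based covering lemma. If $K$ has constant width $w$, then the difference body $K - K$ is centrally symmetric and of constant width $2w$, hence is the Euclidean disk of radius $w$; so $K + x$ and $K + y$ intersect if and only if $|x - y| \le w$. Write $C_i = \{x \in \R^{2} : K + x \in \F_i\}$. The hypothesis becomes $|x - y| \le w$ for every $x \in C_1$ and $y \in C_2$. If $\mathrm{diam}(C_i) \le w$ for either $i$, then all pairs in $\F_i$ intersect and Theorem \ref{thm:Karasev} already provides a 3-piercing of $\F_i$. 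Hence I assume $\mathrm{diam}(C_1) > w$ and $\mathrm{diam}(C_2) > w$.

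In this remaining case, choose $a, a' \in C_1$ with $|a - a'| > w$. The translates $K + a$ and $K + a'$ are then disjoint and strictly separated by a line $\ell$. Since every $B \in \F_2$ meets both of them, $\ell$ is a line transversal to $\F_2$; in particular $C_2$ lies in a strip of width $w$ parallel to $\ell$. Combined with the cross-intersection hypothesis, $C_2$ is also contained in the lens $D(a, w) \cap D(a', w)$, whose diameter perpendicular to $a a'$ equals $\sqrt{4 w^{2} - |a - a'|^{2}} < w \sqrt{3}$. Symmetric statements hold for $C_1$. The theorem then reduces to showing that, under the combined lens-plus-strip constraints, at least one of $C_1, C_2$ can be covered by three translates of $-K$; such a cover is equivalent to 3-piercing the corresponding family, because $p$ pierces $K + x$ if and only if $x \in -K + p$.

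I would establish this covering lemma by a KKM argument in the spirit of Karasev's original proof: parametrise candidate piercing triples by an equilateral triangle $T$, assign to each vertex of $T$ the closed set of parameters for which a chosen piercing configuration fails at that vertex, verify the KKM boundary condition using the lens-plus-strip geometry together with the constant-width property of $K$, and extract a valid triple from a common point. For $K$ within Banach-Mazur distance $\delta \le 1.1178$ of a disk, run the same argument allowing $K - K$ to deviate from a Euclidean disk only up to a factor of $\delta$; the constant $1.1178$ is precisely the largest $\delta$ for which the quantitative margins in the KKM step still close. The main obstacle is this covering lemma: the perpendicular diameter of the lens can approach $w\sqrt{3}$, so diameter-based bounds (e.g.\ Jung's theorem) would at best cover the lens by three disks of radius $\approx w\sqrt{3}/2 > w/2$, and it is the extra strip constraint from the line transversal that forces three piercing points to suffice. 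Making this step quantitative and robust under the Banach-Mazur perturbation is the technical heart of the proof.
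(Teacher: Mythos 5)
Your reduction to the centre sets $C_i$ is sound: for a body of constant width $w$ the difference body $K-K$ is indeed the disk of radius $w$, so the hypothesis becomes $|x-y|\le w$ for $x\in C_1$, $y\in C_2$, and the case $\operatorname{diam}(C_i)\le w$ is correctly dispatched by Theorem \ref{thm:Karasev}. But everything after that is a plan rather than a proof, and the plan hinges on a covering lemma that is both unproven and doubtful. First, the ``extra strip constraint'' you are counting on is essentially redundant in the critical regime: a line $\ell$ separating $K+a$ from $K+a'$ must have its normal $u$ satisfy $w\le\langle u,a'-a\rangle\le|a-a'|$, so when $|a-a'|\to w^{+}$ (the worst case for the lens) $\ell$ is forced to be nearly orthogonal to $aa'$, and the strip $\ell+(-K)$ then bounds $C_2$ by $w$ in the direction of $aa'$ --- weaker than the bound $2w-|a-a'|<w$ that the lens already gives in that direction, and vacuous in the orthogonal direction. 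You are therefore really asking whether the bare lens $D(a,w)\cap D(a',w)$, of height approaching $\sqrt3\,w$, can be covered by three translates of $-K$ for \emph{every} constant-width $K$. That is far from obvious and may simply be false (the truth of the theorem only implies that $C_2$, not the whole lens, is pierceable): as $|a-a'|\to w^{+}$ the two tips and two equator points of the lens form four points whose five ``short'' mutual distances all tend to $w$, so a translate of $-K$ containing two of them must realize a diametral pair of $-K$ in a prescribed direction, which for a Reuleaux triangle pins that translate down completely. The appeal to ``a KKM argument in the spirit of Karasev'' with unspecified boundary conditions does not address this, and the Banach--Mazur constant $1.1178$ is asserted to be ``precisely the largest $\delta$ for which the margins close'' with no computation behind it.

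The paper avoids this trap by extracting \emph{three} line transversals to a single family $\F_j$, in directions at angles $0$, $\pi/4$ and $\pi/2$ (via part (a) of Theorem \ref{thm:SpecialVCH}, the one-dimensional colorful Helly theorem, and a pigeonhole over four equally spaced directions). Three strips of width $w$ in these directions confine the relevant centres to a specific pentagon that is genuinely smaller than your lens, and that pentagon is then covered explicitly by three translates of $-K$: Chakerian's theorem reduces this to covering every rotation of two concrete subpieces by a Reuleaux triangle of width $w$, and the constant $1.1178$ comes out of an explicit circumradius computation for three triangles covering the same pentagon. To salvage your route you would need either a proof of the lens covering lemma (unlikely without extra constraints) or additional transversal directions --- which is exactly the ingredient the paper supplies and your argument is missing.
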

	
	The proof of this theorem, included in Section \ref{sec:DolnikovCW}, can be extended to give an upper bound for the number of piercing points needed for any $K$. For each $K$ a certain pentagon is constructed, the bound is dependent on the maximum number of translates of $K$ needed to cover every rotation of this pentagon.
	As corollary of this proof, using the fact that any convex body is at Banach-Mazur distance at most $2$ from the unit disk, we obtain the following.
	
	\begin{coro}
		Let $K$ be a compact convex set in $\R^{2}$. Let $\F_{1}$ and $\F_{2}$ be finite families of translates of $K$. Suppose that $A \cap B \neq \emptyset$ for every $A \in \F_{i}$ and $B \in \F_{j}$ with $i \neq j$. Then there exists $j \in \{ 1,2 \}$ such that $\F_{j}$ can be pierced by $8$ points.
	\end{coro}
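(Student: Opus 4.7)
\medskip
\noindent\textbf{Proof proposal.} The plan is to run the proof of Theorem~\ref{thm:DolnikovCW} while tracking how the constant $3$ depends on $K$. Since both the hypotheses and the conclusion are preserved by linear isomorphisms, by John's theorem we may assume, after a suitable affine transformation, that $K$ lies in John position with respect to the unit disk $B$, i.e.\ $B \subseteq K \subseteq 2B$. This realizes the fact that every planar convex body has Banach--Mazur distance at most $2$ from the disk.

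The first step is to extract from the proof of Theorem~\ref{thm:DolnikovCW} its general combinatorial content. As the paragraph preceding the corollary indicates, that proof attaches to $K$ a pentagon $P_K$ and shows that one of $\F_1, \F_2$ can be pierced by
\[
N(K) \;:=\; \max_{\theta} \, \mathrm{cov}_{K}\!\bigl(\theta \cdot P_K\bigr)
\]
points, where the maximum runs over all rotations $\theta$ of the plane and $\mathrm{cov}_{K}(S)$ denotes the minimum number of translates of $K$ needed to cover $S$. Under the stronger hypotheses of Theorem~\ref{thm:DolnikovCW} one has $N(K) \le 3$; the corollary therefore reduces to the estimate $N(K) \le 8$ for an arbitrary $K$ placed in John position.

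The second step is to produce this covering. The inclusion $B \subseteq K$ forces $\mathrm{cov}_{K}(S) \le \mathrm{cov}_{B}(S)$ for every planar set $S$, so it suffices to cover each rotated copy of $P_K$ by $8$ translates of the unit disk. The inclusion $K \subseteq 2B$ confines $P_K$ (which is built out of $K$) to a region of explicit bounded size measured against $B$, independent of $\theta$. An explicit symmetric arrangement of $8$ unit disks, tailored to the pentagon's dimensions, should then cover every rotated copy.

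The main obstacle I anticipate is making this covering estimate tight. Two things must be checked by hand: the precise size and shape of $P_K$ under the normalization $B \subseteq K \subseteq 2B$, and the verification that $8$ unit disks---rather than $9$ or more---suffice for \emph{every} rotation of the resulting pentagon. This is a finite planar geometry computation, but the specific value $8$ appearing in the corollary hinges entirely on its outcome; any slack in the pentagon description or in the disk arrangement would translate directly into a weaker piercing bound.
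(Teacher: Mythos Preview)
Your proposal is correct and matches the paper's (implicit) argument: the corollary is obtained exactly by running the Banach--Mazur branch of the proof of Theorem~\ref{thm:DolnikovCW} with the John-position normalization $B\subseteq K\subseteq 2B$, which reduces the piercing question to covering the fixed pentagon by disks of the inner radius. One small simplification you can make: once you pass to covers by the disk $B$ rather than by translates of $K$, the maximum over rotations $\theta$ in your $N(K)$ becomes vacuous---disks are rotation-invariant---so only a single orientation of the pentagon needs to be covered by $8$ unit disks.
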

	
	In particular, this implies a weaker form of Dol’nikov's conjecture (Conjecture \ref{conj:Dolnikov}) where $8$ points are needed to pierce one of the three families.

	\begin{theorem}\label{thm:DolnikovNew}
		Let $K$ be a convex body in $\R^d$, then there exists a number $f_K(d)$ with the following property. If $\F_{1}, \dots, \F_{n}$ are finite families of translates of $K$ with $n \geq 2$ such that $A \cap B \neq \emptyset$ for every $A \in \F_{i}$ and $B \in \F_{j}$ with $i \neq j$, then there exists $j \in \{ 1,2, \dots, n \}$ such that $\bigcup_{i \neq j} \F_{i}$ can be pierced by $f_K(d)$ points.
		Moreover, in the planar case we have the following:
		\begin{enumerate}[\qquad (a)]
			\item we may choose $f_K(2)=9$ for any $K$ and
			\item we may choose $f_K(2)=4$ if $K$ has constant width.
		\end{enumerate}
	\end{theorem}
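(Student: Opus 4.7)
The plan is to reduce piercing to a covering problem for the difference body $K-K$. Pick any $j$ with $\F_j\neq\emptyset$ and fix some $A\in\F_j$. Every $C\in\G:=\bigcup_{i\neq j}\F_i$ meets $A$ by the cross-intersection hypothesis; writing $C=K+c$ and $A=K+a$, this amounts to $c\in a+(K-K)$. Since a point $p$ pierces $K+c$ iff $c\in p-K$, a finite set $\{p_1,\dots,p_m\}$ pierces $\G$ whenever the translates $p_1-K,\dots,p_m-K$ together cover $a+(K-K)$. The minimum such $m$ is the covering number $N_K$ of $K-K$ by translates of $-K$, which depends only on $K$ and the dimension and is finite. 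Setting $f_K(d):=N_K$ yields the existence assertion.

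For part (a) the plan is to show $N_K\le 9$ for every planar convex body $K$, by constructing an explicit covering of $K-K$ by nine translates of $-K$. Alternatively, one can combine the $n=2$ corollary of Theorem~\ref{thm:DolnikovCW} (which pierces one of two cross-intersecting families by $8$ points) with one additional point to pass from piercing the single family $\F_{3-j}$ to piercing the full union $\bigcup_{i\neq j}\F_i$ when $n\ge 3$, yielding the uniform bound $9$.

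For part (b) the situation is more delicate. When $K$ has constant width $w$, the body $K-K$ is centrally symmetric of constant width $2w$, hence a Euclidean disk of radius $w$; but even when $K$ is itself a disk, covering $K-K$ by translates of $-K$ requires $7$ pieces, so the bound $4$ cannot arise from the above covering argument alone. Here I would exploit the cross-intersection against the entire family $\F_j$ rather than a single representative: the centers of $\G$ in fact lie in the smaller region $\bigcap_{A\in\F_j}(A-K)$. Combining this with Karasev's theorem (Theorem~\ref{thm:Karasev}) for pairwise intersecting translates of $K$, in the spirit of the argument of Jerónimo-Castro, Magazinov and Soberón for circles, and with a Helly- or pigeonhole-type choice of $j$, should yield the bound $4$. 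The main obstacle is precisely part (b): obtaining $4$ rather than the covering-based $N_K$ requires exploiting the joint structure of all $n$ families, and the jump from $3$ (the disk case) to $4$ reflects the loss of rotational symmetry of a general constant-width body.
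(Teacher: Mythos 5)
Your reduction for the existence statement is correct and is genuinely simpler than the paper's argument: fixing one set $A=K+a\in\F_j$, every $C=K+c\in\bigcup_{i\neq j}\F_i$ satisfies $c\in a+(K-K)$, and covering $a+(K-K)$ by $N_K$ translates of $-K$ yields $N_K$ piercing points. This bypasses the paper's machinery entirely (the paper instead shows, via Lemma \ref{lem:specialVCH} and the sphere-colouring Lemma \ref{lem:colorful sphere}, that some $\bigcup_{i\neq j}\F_i$ admits hyperplane transversals in $d$ pairwise orthogonal directions, then uses John's ellipsoid to trap the centers in a cube of side $2d$ and covers that cube by unit balls). Note, however, that your region $a+(K-K)$ is much larger than the paper's: for instance, with $K$ between concentric disks of radii $1$ and $2$, your region can be a disk of radius $4$, while the paper's is a $4\times4$ square; this size difference is exactly why your route cannot reproduce the stated constants.

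This is where the genuine gaps are. For part (a) you assert, without construction or reference, that $K-K$ can be covered by $9$ translates of $-K$ for \emph{every} planar convex body $K$; this is a nontrivial claim (for the disk the answer is $7$, for a triangle the natural decomposition of the hexagon $K-K$ already needs more than $9$ up-triangles unless one works harder, and for general $K$ nothing is offered). Your fallback via the $8$-point two-family corollary is not valid as stated: knowing that one of two cross-intersecting families can be pierced by $8$ points says nothing about piercing the union of the other $n-1$ families, and ``one additional point'' cannot pierce $\bigcup_{i\neq j,\,i\neq 1}\F_i$ in general (two cross-intersecting families of translates need not be pierceable by a single point), so the bound $9$ does not follow. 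For part (b) you explicitly leave the argument open; moreover, as you note, the covering-of-$K-K$ idea is structurally incapable of giving $4$. The missing idea in both parts is the paper's: from the colourful $1$-dimensional Helly observation one gets, for every direction, a line transversal to all but one family, and a connectedness/pigeonhole argument on $\mathbb{S}^{1}$ produces two orthogonal (in fact, in Section \ref{sec:DolnikovCW}, three) line transversals to the \emph{same} sub-union $\bigcup_{i\neq j}\F_i$; intersecting the corresponding strips confines the relevant centers to a square of side $4$ (general $K$, via John) or $1+\sqrt3$ (constant width, using the in- and circumradius bounds), which is then covered by $9$, respectively $4$, unit disks. Without some substitute for this transversal-localization step, parts (a) and (b) remain unproved in your proposal.
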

	
	The proof of this theorem is included in Section \ref{sec:DolnikovNew} below.
	
	Using the KKM theorem \cite{Knaster1929}, we show that Theorem \ref{thm:VCH} holds with $f(2) = 1$ and $g(2) = 2$. In \cite{Martinez2020} it is shown that if $g(d)$ is a number for which Theorem \ref{thm:VCH} holds then $g(d) \geq \bigl\lceil \frac{d+1}{2} \bigr\rceil$. Therefore the values $f(2) = 1$ and $g(2) = 2$ cannot be improved. We actually prove a more general result in $\R^2$.
	
	\begin{theorem}\label{thm:PlaneVCH}
		Let $\F_{1}, \dots, \F_{n}$ be finite families of convex sets in $\R^{2}$ with $n \geq 2$. Suppose that $A \cap B \neq \emptyset$ for every $A \in \F_{i}$ and $B \in \F_{j}$ with $i \neq j$. Then one of the following statements holds:
		\begin{enumerate}[\qquad 1.]
			\item there exists $j \in \{ 1,2, \dots, n \}$ such that $\bigcup_{i \neq j} \F_{i}$ can be pierced by $1$ point, or
			\item the family $\bigcup_{i=1}^{n} \F_{i}$ can be crossed by $2$ lines.
		\end{enumerate}
	\end{theorem}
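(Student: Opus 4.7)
The plan is to parametrize the problem by directions on the unit circle $S^1$ and combine this with a case analysis and a KKM argument. For each $u \in S^1$ and each family $\F_i$, let $\pi_u : \R^2 \to \R$ denote the orthogonal projection onto the line spanned by $u$, and set $D_i(u) := \bigcap_{C \in \F_i} \pi_u(C)$. By one-dimensional Helly, $D_i(u) \neq \emptyset$ precisely when $\F_i$ admits a line transversal perpendicular to $u$, so these intervals encode the existence of directional line transversals.

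The first key step is an \emph{interleaving lemma}: for any $i \neq j$ and any $u \in S^1$, at least one of $D_i(u)$, $D_j(u)$ is non-empty. The proof is a short four-interval argument by contradiction --- if both are empty, one-dimensional Helly supplies a pair of sets in each of $\F_i$ and $\F_j$ with disjoint $u$-projections, but the colorful pairwise hypothesis forces these four intervals to interleave in an impossible way on $\R$. Consequently the ``bad direction'' sets $E_i := \{u \in S^1 : D_i(u) = \emptyset\}$ are pairwise disjoint closed arcs of $S^1$.

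Next comes a case analysis. If some $E_{i_0}$ is non-empty and proper in $S^1$, a bad direction yields a line $\ell_0$ separating two sets of $\F_{i_0}$; this $\ell_0$ meets every set of $\F_j$ for each $j \neq i_0$ (as any such set must meet both separated members), so $\ell_0$ is a line transversal to $\bigcup_{j \neq i_0} \F_j$. Pairing $\ell_0$ with a line transversal to $\F_{i_0}$ obtained from any good direction then yields the desired 2-line transversal of $\bigcup_i \F_i$. The remaining cases are all $E_i = \emptyset$, or some $E_{i_0} = S^1$ (forcing every other $E_j$ to be empty), and here the KKM theorem is invoked --- most likely on a triangle parametrizing three extremal directions or configurations, with closed colour classes designed so that the KKM face condition holds and the resulting common point provides either a single piercing point for $\bigcup_{i \neq j} \F_i$ or the second transversal line.

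I expect the main obstacle to be exactly this KKM application in the degenerate cases. Having line transversals in every direction does not by itself imply a common point of $\bigcap \F_i$ --- three thin strips in triangular configuration illustrate this --- so the KKM covering must encode the full colorful structure rather than just the direction-wise one-dimensional information. Designing the appropriate covering, and in particular verifying the face condition against the geometric constraints imposed by the colorful pairwise hypothesis, is where the bulk of the argument should lie.
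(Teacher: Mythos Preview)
Your directional set-up and interleaving lemma are correct and in fact reproduce the content of the paper's Lemma~\ref{lem:specialVCH}: the sets $E_i$ are closed and pairwise disjoint, and your handling of the case where some $E_{i_0}$ is non-empty and proper is clean and direct (two lines, one separating inside $\F_{i_0}$ and one transversal to $\F_{i_0}$). This is essentially the route of the paper's \emph{second} proof of the theorem.

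The gap is exactly where you locate it, and it is a genuine one. In the degenerate cases you arrive at a family $\G=\bigcup_{i\neq j}\F_i$ (possibly $\G=\bigcup_i\F_i$) that is pairwise intersecting, and you must show that either $\bigcap\G\neq\emptyset$ or $\bigcup_i\F_i$ is crossed by two lines. You do not supply an argument for this, and your sketch of a KKM covering ``on a triangle parametrizing three extremal directions'' is not the right object: the directional information you have accumulated only tells you that $\G$ is pairwise intersecting, which (as your three-strip example shows) does not by itself pin down a point or two lines. The paper closes this gap by invoking an external lemma of Mart\'inez-Sandoval, Rold\'an-Pensado and Rubin (Lemma~\ref{lem:MSRPR}): if every $A\in\mathcal A$ meets every $B\in\mathcal B$, then either $\bigcap\mathcal A\neq\emptyset$ or $\mathcal B$ is crossed by $d$ hyperplanes. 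Applied with $\mathcal A=\G$ and $\mathcal B=\bigcup_i\F_i$ this finishes immediately. Without that lemma or an equivalent, your proof is incomplete.

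The paper's \emph{primary} proof avoids this detour entirely and is worth contrasting with yours. It applies KKM not on $S^1$ or a triangle of directions, but on the $3$-simplex $\Delta^3$, whose points parametrize ordered quadruples $f_1,f_2,f_3,f_4$ on a circle enclosing all the sets. The two chords $[f_1,f_3]$ and $[f_2,f_4]$ always cross and cut the disk into four open regions $R^1,\dots,R^4$; the cover $O_i=\{x:\text{some set lies in }R_x^i\}$ satisfies the KKM face condition because $R_x^i$ collapses when $x_i=0$. Either some pair of chords already crosses everything, or KKM yields a point $y$ with four sets $C_1,\dots,C_4$, one in each region. These four sets are pairwise disjoint, hence all lie in a single $\F_j$, and then the crossing point of the two chords is forced to lie in every member of $\bigcup_{i\neq j}\F_i$ by convexity. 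This is a self-contained argument with no appeal to \cite{Martinez2020}; the KKM is doing real geometric work (producing four mutually separated sets), not just bookkeeping on directions.
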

	
	The proof of this result is included in Section \ref{sec:PlaneVCH} and first appeared in the master's thesis of the first author \cite{Gomez-Navarro2022} in 2022. Even though the $2$-dimensional case of Theorem \ref{thm:VCH} does not hold with $g(2) = 1$, it does for some special families of convex sets.
	
	\begin{theorem}\label{thm:SpecialVCH}
		Let $\K$ a subfamily of the family of convex sets in $\R^d$, then there exist numbers $f_\K(d)$ and $g_\K(d)$ with the following property. If $\F_{1}, \dots, \F_{n}$ are finite families of sets in $\K$ with $n\ge 2$ such that $A \cap B \neq \emptyset$ for every $A \in \F_{i}$ and $B \in \F_{j}$ with $i \neq j$, then one of the following statements holds:
		\begin{enumerate}[\qquad 1.]
			\item there exists $j \in \{ 1,2, \dots, n \}$ such that $\bigcup_{i \neq j} \F_{i}$ can be pierced by $f_\K(d)$ points, or
			\item the family $\bigcup_{i=1}^{n} \F_{i}$ can be crossed by $g_\K(d)$ hyperplanes.
		\end{enumerate}
		Here we may choose $f_\K(d)=1$ and $g_\K(d)=d$.
		Moreover, for $d=2$ and specific families $\K$ we have the following:
		\begin{enumerate}[\qquad (a)]
			\item if $\K$ is the family of translates of a given compact convex body we may choose $f_\K(2)=3$ and $g_\K(2)=1$,
			\item if $\K$ is the family of homothetic copies of a given compact convex body we may choose $f_\K(2)=16$ and $g_\K(2)=1$,
			\item if $\K$ is the family of homothetic copies of a triangle we may choose $f_\K(2)=3$ and $g_\K(2)=1$, and
			\item if $\K$ is the family of circles we may choose $f_\K(2)=4$ and $g_\K(2)=1$.
		\end{enumerate}
		For general $d$ we have the following:
		\begin{enumerate}[\qquad (a)]\setcounter{enumi}{4}
			\item if $\K$ is the family of axis-parallel boxes we may choose $f_\K(d)=1$ and $g_\K(d)=1$, and
			\item if $\K$ is the family of homothetic copies of a given compact convex body we may choose $g_\K(d)=1$.
		\end{enumerate}
	\end{theorem}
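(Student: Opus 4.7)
My plan is to handle the three pieces of the theorem in sequence, starting with the general upper bound and then specialising to the named subfamilies of convex sets.

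For the general bound $(f_\K(d), g_\K(d)) = (1,d)$, I would generalise the KKM-based argument of Theorem~\ref{thm:PlaneVCH} to arbitrary dimension. The natural route is induction on $d$ with Theorem~\ref{thm:PlaneVCH} as the base case. Assume that no $\bigcup_{i \neq j}\F_i$ has a common point. Projecting orthogonally onto a generic hyperplane $H$ preserves convexity and the colorful pairwise intersecting condition, and by the inductive hypothesis the projected system admits either some $\bigcup_{i \neq j}$-projection with a common point, or $d-1$ hyperplane transversals in $H$. The former would lift (via a Helly argument, using that the original $\bigcup_{i\neq j}\F_i$ fails to have a common point only for concrete reasons that do not survive projection) to a contradiction; the latter gives $d-1$ hyperplanes in $\R^d$ covering every set whose projection is hit, and a direct one-dimensional argument along the projection direction supplies the last hyperplane for the residual sets.

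For the four special two-dimensional families (a)--(d), the common scheme is a contrapositive argument. Assume that $\bigcup_i\F_i$ has no line transversal, with the aim of producing some $j$ such that $\bigcup_{i\neq j}\F_i$ is pairwise intersecting; the appropriate Helly-type theorem---Theorem~\ref{thm:Karasev} for (a), Theorem~\ref{thm:Kim} for (b), Theorem~\ref{thm:Dumitrescu} for (c), Theorem~\ref{thm:Danzer} for (d)---then delivers the claimed piercing. A family $\bigcup_{i\neq j}\F_i$ fails to be pairwise intersecting only if some $\F_k$ with $k\neq j$ contains a disjoint pair, since the colorful condition forces cross-family pairs to intersect. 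If at most one family $\F_{i_0}$ contains a disjoint pair, then $j=i_0$ works. Otherwise two distinct families $\F_{i_1},\F_{i_2}$ each contain a disjoint pair $A_1,A_1'\in\F_{i_1}$ and $A_2,A_2'\in\F_{i_2}$. A strictly separating line for $A_1,A_1'$ is a line transversal to $\bigcup_{i\neq i_1}\F_i$, because every such set meets both $A_1$ and $A_1'$; symmetrically for $A_2,A_2'$. If one can choose a single line $\ell$ that strictly separates both pairs simultaneously, then $\ell$ is a line transversal to the whole $\bigcup_i\F_i$, contradicting the standing assumption. Cases (e) and (f) follow the same pattern, now using Theorem~\ref{thm:rectangles} and Theorem~\ref{thm:Grunbaum}, with separating hyperplanes (axis-aligned in the box case) in place of separating lines; in case (e) the extra rigidity of axis-parallel structure allows the separating hyperplane to be chosen axis-aligned, which is why a single hyperplane already suffices.

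The main obstacle I expect is the final step in the two-disjoint-pair case: the \emph{simultaneous-separation lemma}. Concretely, given four translates (or homothets) $A_1,A_1',A_2,A_2'$ of $K$ with $A_1\cap A_1'=\emptyset$, $A_2\cap A_2'=\emptyset$, and all four cross-intersections non-empty, one needs a line separating $\{A_1,A_2\}$ from $\{A_1',A_2'\}$ (or some relabelling). This requires a configuration-dependent case split according to the relative positions of the four translates and may rely on the specific convex geometry of $K$, which is likely why the constants in (c) (triangles) and (d) (circles) differ from the general homothet bound in (b). A secondary technical issue is the projection step in the general bound: one has to ensure that a generic choice of projection direction avoids a measure-zero set of degenerate configurations so that the inductive hypothesis applies cleanly.
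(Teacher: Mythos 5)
Your overall skeleton (no hyperplane transversal $\Rightarrow$ find $j$ with $\bigcup_{i\neq j}\F_i$ pairwise intersecting $\Rightarrow$ apply the monochromatic theorems of Karasev, Kim, Dumitrescu, Danzer, the box theorem and Gr\"unbaum) is the same as the paper's, but the step you flag as the ``main obstacle'' is not merely hard --- it is impossible, and this breaks the two-disjoint-pair case. If a line $\ell$ separates $A_1$ from $A_1'$ (a disjoint pair in $\F_{i_1}$), then every set of another family meets both $A_1$ and $A_1'$, so the projections of $A_2$ and $A_2'$ onto the normal direction of $\ell$ each contain the whole gap between the projections of $A_1$ and $A_1'$; hence $A_2$ and $A_2'$ both cross every such line and can never lie on opposite sides of it, even weakly. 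So no line simultaneously separates both pairs, and no case analysis on the geometry of $K$ can rescue this. Moreover the configuration you would need to exclude genuinely occurs (two disjoint wide horizontal rectangles in $\F_1$ and two disjoint tall vertical rectangles in $\F_2$ satisfy all cross-intersections), so the case of two families each containing a disjoint pair must be handled by a different mechanism. The paper's mechanism is Lemma \ref{lem:specialVCH}: for each direction $u$, either all projections onto $u$ share a point (a hyperplane transversal orthogonal to $u$) or some single family has a pair separated orthogonally to $u$; two \emph{different} families cannot have pairs separated in the \emph{same} direction (this is exactly the impossibility above, used positively), so the open color classes $O_i\subset\mathbb{S}^{d-1}$ are pairwise disjoint, and by connectedness one class $O_j$ covers the sphere. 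Then $\bigcup_{i\neq j}\F_i$ has a hyperplane transversal in every direction, which forces it to be pairwise intersecting; the monochromatic theorems then give parts (a)--(f). Note that the conclusion you wanted in your bad case (a transversal line) does follow from this lemma, but only via the sweep-and-connectedness argument, not via a single separating line.

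Your argument for the general pair $(f_\K(d),g_\K(d))=(1,d)$ also has a gap. In the branch where the projected system has some $\bigcup_{i\neq j}$ pierced by one point, this only says the original family $\bigcup_{i\neq j}\F_i$ has a \emph{line} transversal in the fiber direction; a family with empty intersection can perfectly well project to a family with a common point, so there is no ``lifting'' contradiction, and the parenthetical Helly argument you invoke does not exist as stated. The paper avoids induction on dimension altogether: it first applies Lemma \ref{lem:specialVCH}, and in the pairwise-intersecting case applies the known Lemma \ref{lem:MSRPR} of Mart\'inez-Sandoval, Rold\'an-Pensado and Rubin with $\mathcal A=\bigcup_{i\neq j}\F_i$ and $\mathcal B=\bigcup_{i=1}^n\F_i$, yielding either a single piercing point for $\mathcal A$ or $d$ crossing hyperplanes for $\mathcal B$. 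You would need to supply these (or equivalent) ingredients to close both gaps.
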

	
	Note that in this theorem, statement 2 is about crossing hyperplanes while in Theorem \ref{thm:VCH} it is about crossing lines. The proof is included in Section \ref{sec:SpecialVCH}.
	
	\section{Proof of Theorem \ref{thm:PlaneVCH}}\label{sec:PlaneVCH}
	
	We rely on the KKM theorem \cite{Knaster1929} in order to prove Theorem \ref{thm:PlaneVCH}. Let
	\begin{align*}
		\Delta^{n} = \left\{ (x_{1}, \dots, x_{n+1}) \in \mathbb{R}^{n+1} \ : \ x_{i} \geq 0, \ \sum_{i=1}^{n+1} x_{i} = 1 \right\}
	\end{align*}
	denote the $n$-dimensional simplex in $\mathbb{R}^{n+1}$ that is the convex hull of $\{ e_{1}, \dots, e_{n+1} \}$, the standard orthonormal basis of $\mathbb{R}^{n+1}$.
	
	\begin{theorem}[KKM theorem \cite{Knaster1929}]\label{KKM theorem}
		Let $\{ O_{1}, O_{2}, \dots, O_{n+1} \}$ be an open (or closed) cover of $\Delta^{n}$ such that $e_{i} \in O_{i}$ for each $i \in \{ 1,2, \dots, n+1 \}$, and $\conv \{ e_{i} : i \in I \} \subset \bigcup_{i \in I} O_{i}$ for each $I \subset \{ 1,2, \dots, n+1 \}$.
		Then $\bigcap_{i=1}^{n+1} O_{i} \neq \emptyset$.
	\end{theorem}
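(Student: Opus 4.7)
The plan is to derive the KKM theorem from Sperner's lemma together with a compactness argument in $\Delta^n$. I would first treat the closed case directly and then reduce the open case to it by a shrinking argument.

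To set things up, I would take a sequence of triangulations $T_k$ of $\Delta^n$ whose mesh tends to zero (for instance, iterated barycentric subdivisions). For each vertex $v$ of $T_k$, let $I(v)=\{i : v_i>0\}$, so that $v \in \conv\{e_i : i \in I(v)\}$. The KKM hypothesis applied to $I = I(v)$ yields some $i \in I(v)$ with $v \in O_i$; set $\ell(v)$ to be any such $i$. By construction $\ell(e_i) = i$, and if $v \in \conv\{e_i : i \in I\}$ then $I(v) \subset I$ and hence $\ell(v) \in I$, so $\ell$ is a Sperner labeling of $T_k$. Sperner's lemma then produces in each $T_k$ a simplex $\sigma_k$ whose $n+1$ vertices $v_1^k, \ldots, v_{n+1}^k$ receive all labels, so that $v_i^k \in O_i$ for every $i$, with $\operatorname{diam}(\sigma_k) \to 0$.

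Passing to a subsequence, compactness of $\Delta^n$ yields $v_1^k \to x^*$, and the vanishing diameters force $v_i^k \to x^*$ for every $i$. In the closed case, each $O_i$ is closed and therefore contains the limit $x^*$, giving $x^* \in \bigcap_i O_i$. For the open case, I would first shrink the cover: using compactness of each face $\conv\{e_i : i \in I\}$ together with the KKM hypothesis restricted to that face, there exists $\varepsilon > 0$ such that the closed sets $C_i = \{x \in \Delta^n : d(x, \Delta^n \setminus O_i) \ge \varepsilon\}$ contain $e_i$ and still satisfy the KKM condition on every face. Applying the closed case to $\{C_i\}$ then produces a point in $\bigcap_i C_i \subset \bigcap_i O_i$.

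The main obstacle is the combinatorial heart of the argument, Sperner's lemma itself, which I would state and prove separately via the classical parity/door-counting argument (or by induction on $n$). A secondary subtlety is the open case: passing to closures of an open KKM cover trivially gives a closed KKM cover, but $\bigcap_i \overline{O_i}$ may strictly exceed $\bigcap_i O_i$, so some honest shrinking is needed; verifying that the uniform constant $\varepsilon$ above preserves the KKM condition on every face simultaneously is the step that requires the most care.
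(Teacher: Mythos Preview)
The paper does not prove the KKM theorem; it merely states it as a classical result cited from \cite{Knaster1929} and uses it as a black box in the proof of Theorem~\ref{thm:PlaneVCH}. So there is no ``paper's own proof'' to compare against.

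That said, your outline is the standard Sperner-based argument and is correct. The labeling you define is a genuine Sperner labeling because $I(v)$ is exactly the index set of the minimal face containing $v$, so the KKM hypothesis on that face supplies an admissible label. The compactness/limit step for the closed case is routine. For the open case your shrinking argument works: defining $f_i(x)=d(x,\Delta^n\setminus O_i)$ and $g_I(x)=\max_{i\in I} f_i(x)$ on the face $\conv\{e_i:i\in I\}$, each $g_I$ is continuous and strictly positive on its (compact) face, hence bounded below by some $\varepsilon_I>0$; taking $\varepsilon=\min_I \varepsilon_I$ over the finitely many faces gives the uniform constant you need, and the resulting closed sets $C_i$ inherit the KKM face condition. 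You are right that simply taking closures does not suffice, and it is good that you flagged this. The only item left as a black box is Sperner's lemma itself, which you correctly identify as requiring a separate (standard) argument.
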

	
	In order to prove Theorem \ref{thm:PlaneVCH} we follow ideas similar to the ones used in \cite{Mcginnis2021}.
	
	\begin{proof}[Proof of Theorem \ref{thm:PlaneVCH}]
		We can assume, without loss of generality, that the sets in the finite families are compact (see \cite[Chapter 1]{Gomez-Navarro2022}). Hence, we may scale the plane such that every set in $\bigcup_{i=1}^{n} \F_{i}$ is contained in the unit disk. Let $f(t)$ be a parametrization of the unit circle defined by
		\begin{align*}
			f(t) = (\cos(2 \pi t), \ \sin(2 \pi t)).
		\end{align*}
		To each point $x=(x_{1}, \dots, x_{4}) \in \Delta^{3}$ we associate $4$ points on the unit circle given by
		\begin{align*}
			f_{i}(x) = f \left( \sum_{j=1}^{i} {x_{j}} \right),
		\end{align*}
		for $1 \leq i \leq 4$. Let $l_{1}(x) = l_{3}(x) = [f_{1}(x), f_{3}(x)]$ and $l_{2}(x) = l_{4}(x) = [f_{2}(x), f_{4}(x)]$. For $i = 1, \dots, 4$ let $R_{x}^{i}$ be the interior of the region bounded by $l_{i-1}(x), l_{i}(x)$ and the arc on the unit circle connecting $f_{i-1}(x)$ and $f_{i}(x)$, where $i-1$ is taken modulo $4$ (see Figure \ref{figure_proof_KKM}).
		
		\begin{figure}
			\includegraphics{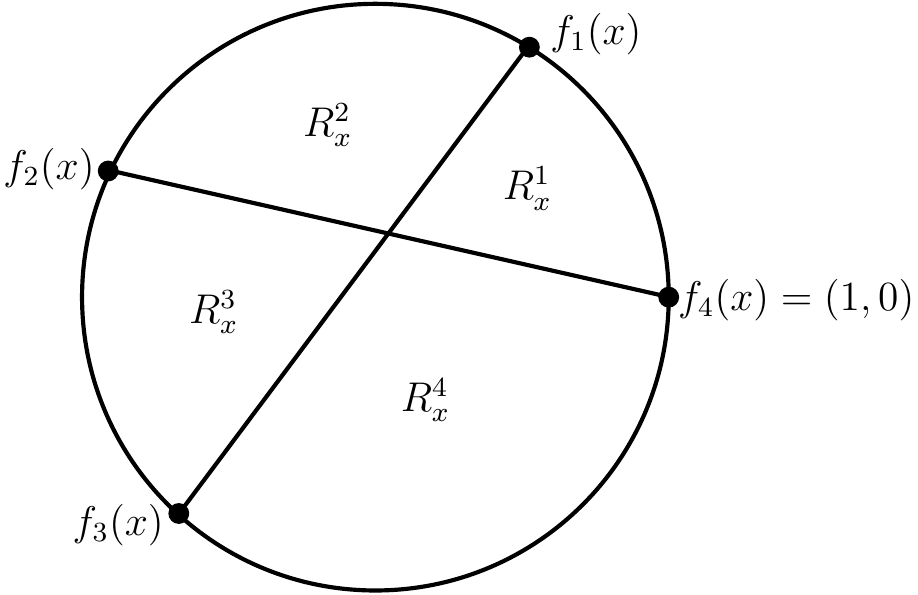}
			\caption{Illustration for the proof of Theorem \ref{thm:PlaneVCH}.}
			\label{figure_proof_KKM}
		\end{figure}
		
		Notice that $f_{4}(x) = (1,0)$ for each $x \in \Delta^{3}$. Also, the points $f_{1}(x), f_{2}(x), f_{3}(x), f_{4}(x)$ are always in counter-clockwise order so the lines $l_{1}(x) = [f_{1}(x), f_{3}(x)]$ and $l_{2}(x) = [f_{2}(x), f_{4}(x)]$ intersect.
		
		If there is some $x \in \Delta^{3}$ for which $l_{1}(x) \cup l_{2}(x)$ is a transversal to the family $\bigcup_{i=1}^{n} \F_{i}$, the second statement of the theorem holds and we are done. Otherwise, since the sets in $\bigcup_{i=1}^{n} \F_{i}$ are convex, we may assume that for every $x \in \Delta^{3}$ there is a set $C \in \bigcup_{i=1}^{n} \F_{i}$ contained in one of the four open regions $R_{x}^{i}$. For $i = 1, \dots, 4$, let $O_{i}$ be the set of points $x \in \Delta^{3}$ such that $R_{x}^{i}$ contains a set $C \in \bigcup_{i=1}^{n} \F_{i}$. Since the sets $C \in \bigcup_{i=1}^{n} \F_{i}$ are compact, $O_{i}$ is open. Notice that $\Delta^{3} = \bigcup_{i=1}^{4} {O_{i}}$, since for every $x \in \Delta^{3}$ there is a set $C \in \bigcup_{i=1}^{n} \F_{i}$ contained in one of the four open regions $R_{x}^{i}$. Observe that if $x \in \conv \{ e_{i} : i \in I \}$ for some $I \subset \{ 1, \dots, 4 \}$, then $R_{x}^{j} = \emptyset$ for $j \notin I$, so $x \in \bigcup_{i \in I} {O_{i}}$.
		Therefore $\{ O_{1}, \dots, O_{4} \}$ is an open cover that satisfies the hypothesis of the KKM theorem (Theorem \ref{KKM theorem}). Consequently there is a point $y = (y_{1}, \dots, y_{4}) \in \Delta^{3}$ such that $y \in \bigcap_{i=1}^{4} {O_{i}}$. In other words, each one of the open regions $R_{y}^{i}$ contains a set $C_{i} \in \bigcup_{i=1}^{n} \F_{i}$.
		
		\begin{figure}
			\includegraphics{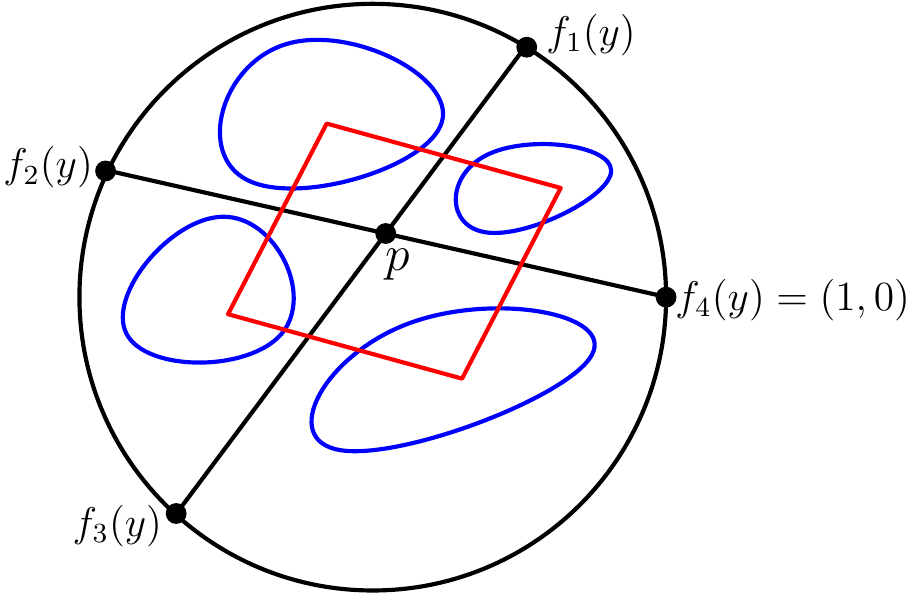}
			\caption{If the blue sets are in the family $\mathcal{F}_{j}$, then every set in the family $\bigcup_{i \neq j} \mathcal{F}_{i}$ contains the point $p$.}
			\label{figure_1}
		\end{figure}
		
		Since the sets $C_{1}, \dots, C_{4}$ are pairwise disjoint, the sets $C_{1}, \dots, C_{4}$ are in the same family $\F_{j}$, for some $j \in \{ 1, \dots, n \}$. Let $p$ be the point intersection of the lines $l_{1}(y) = l_{3}(y)$ and $l_{2}(y) = l_{4}(y)$. Take any set $B \in \bigcup_{i \neq j} \F_{i}$. Since $B \cap C_{i} \neq \emptyset$ for every $i \in \{ 1, \dots, 4 \}$ and $B$ is convex, then $B$ intersects the line segments $[p, f_{4}(y)]$ and $[p, f_{2}(y)]$. Therefore, $p \in B$ and the first statement of the theorem holds (see Figure \ref{figure_1}).
	\end{proof}
	
	\section{Proof of Theorem \ref{thm:SpecialVCH}}\label{sec:SpecialVCH}
	
	Here we prove that for some special families of convex sets, the $2$-dimensional case of Theorem \ref{thm:VCH} holds with $g(2) = 1$. We also show similar results in higher dimensions and as a corollary of this we obtain a second proof of Theorem \ref{thm:PlaneVCH}.
	
	\begin{proof}[Proof of Theorem \ref{thm:SpecialVCH}]
		
		To show that the numbers $f_\K(d)$ and $g_\K(d)$ exist we use the following two auxiliary results.
		
		
		\begin{lemma}[\cite{Martinez2020}]\label{lem:MSRPR}
		If $\mathcal A$ and $\mathcal B$ are finite families of convex sets in $\mathbb{R}^{d}$ such that $A \cap B \neq \emptyset$ for every $A \in \mathcal{A}$ and $B \in \mathcal{B}$, then either $\bigcap \mathcal{A} \neq \emptyset$ or $\mathcal{B}$ can be crossed by $d$ hyperplanes.
		\end{lemma}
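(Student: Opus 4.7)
The plan is to induct on $d$, using Helly's theorem to extract a minimal "bad" sub-family of $\mathcal{A}$ and then applying a separation argument to peel off one hyperplane at a time.

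In the base case $d=1$, if $\bigcap \mathcal{A} = \emptyset$ then Helly on the line gives two disjoint intervals $A, A' \in \mathcal{A}$; any point lying strictly between them lies in every $B \in \mathcal{B}$ (which must meet both), so a single point suffices as the transversal.

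For the inductive step, assume $\bigcap \mathcal{A} = \emptyset$ and choose, via Helly, a sub-family $\{A_0, \dots, A_d\} \subseteq \mathcal{A}$ with empty intersection but with any $d$ of them intersecting. Then $C := \bigcap_{i \ge 1} A_i$ is a non-empty convex set disjoint from $A_0$. Take a hyperplane $H$ strictly separating them, with $A_0 \subseteq H^-$ and $C \subseteq H^+$. Any $B \in \mathcal{B}$ meets $A_0$ and therefore $H^-$; if $B$ additionally meets $H^+$, then $B$ crosses $H$ and is already covered. The remaining sub-family $\mathcal{B}' = \{B \in \mathcal{B} : B \subseteq H^-\}$ still meets every $A_i$, and the truncated family $\tilde{\mathcal{A}} := \{A_i \cap H^- : i \ge 1\}$ has empty intersection, since $\bigcap \tilde{\mathcal{A}} \subseteq C \cap H^- = \emptyset$. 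Reducing to dimension $d-1$ by projecting onto $H$ along its normal and invoking the inductive hypothesis gives $d-1$ hyperplanes of $H$ crossing $\pi(\mathcal{B}')$; their preimages, together with $H$, form the required $d$ hyperplanes of $\mathbb{R}^d$ crossing all of $\mathcal{B}$.

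The main obstacle is controlling the projection step: empty intersections can become non-empty after projection, corresponding to a line transversal to $\tilde{\mathcal{A}}$ parallel to the normal of $H$, and if that occurs the inductive hypothesis does not directly apply in $H$. Overcoming this requires either choosing $H$ more carefully---for instance as a supporting hyperplane of $C$ passing through one of the affinely independent witness points $p_i \in \bigcap_{j \ne i} A_j$ produced by the minimal Helly configuration, so that the projection geometry is forced---or treating the transversal-line case separately to produce the extra hyperplane without exceeding the $d$-hyperplane budget. Ensuring exactly $d$ hyperplanes, and not $d+1$, is the bookkeeping where the argument demands the most care.
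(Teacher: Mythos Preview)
The paper does not prove this lemma; it is quoted from \cite{Martinez2020} and used as a black box inside the proof of Theorem~\ref{thm:SpecialVCH}. There is therefore no proof in the present paper to compare your attempt against.

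On its own merits, your sketch is not a proof: the gap you yourself flag in the projection step is real and you do not close it. After separating $A_0$ from $C=\bigcap_{i\ge 1}A_i$ by $H$ and projecting $\tilde{\mathcal A}=\{A_i\cap H^-:i\ge 1\}$ onto $H$, nothing prevents $\bigcap_i \pi(A_i\cap H^-)\neq\emptyset$; a line orthogonal to $H$ can thread all of the $A_i\cap H^-$ even though their intersection in $\mathbb R^d$ is empty. When that happens the inductive hypothesis yields nothing for $\pi(\mathcal B')$, and you are left with only the single hyperplane $H$. Neither of your suggested repairs is actually an argument. Taking $H$ to support $C$ at a witness point $p_i$ does not exclude such an orthogonal line transversal (indeed, $p_0\in C$ lies on $H$ and in every $A_j$ for $j\ge 1$, so if anything this choice pushes the projected sets toward having a common point). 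And ``treating the transversal-line case separately'' would have to manufacture $d-1$ further hyperplanes crossing every $B\in\mathcal B'$ from essentially no remaining hypothesis, which you do not do. As written, the induction does not close.
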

		
		As was noted in \cite{Martinez2020}, by applying Lemma \ref{lem:MSRPR} twice we obtain that either $\mathcal A$ or $\mathcal B$ can be pierced by $1$ point or $\mathcal A \cup \mathcal B$ can be crossed by $2d$ hyperplanes. However, these numbers are not optimal. We use the following auxiliary result (which is also useful later) to prove that Theorem \ref{thm:SpecialVCH} holds with $f_{\K}(d) = 1$ and $g_{\K}(d) = d$.
		
		
		\begin{lemma}\label{lem:specialVCH}
			Let $\mathcal{F}_{1}, \dots, \mathcal{F}_{n}$ be finite families of convex sets in $\mathbb{R}^{d}$ with $n \geq 2$ such that $A \cap B \neq \emptyset$ for every $A \in \mathcal{F}_{i}$ and $B \in \mathcal{F}_{j}$ with $i \neq j$. Then one of the following statements holds:
			\begin{enumerate}
				\item[1.] there exists $j \in \{ 1,2, \dots, n \}$ such that every two sets in $\bigcup_{i \neq j} \mathcal{F}_{i}$ intersect, or
				\item[2.] the family $\bigcup_{i=1}^{n} \mathcal{F}_{i}$ has a hyperplane transversal.
			\end{enumerate}
		\end{lemma}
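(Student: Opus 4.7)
The plan is to prove the contrapositive: assume statement (1) fails for every $j$, and deduce statement (2). The first step is to observe that this failure forces at least two of the families, say $\mathcal{F}_a$ and $\mathcal{F}_b$, to each contain a pair of disjoint sets. Indeed, since any two sets from different families intersect by hypothesis, any two disjoint sets inside $\bigcup_{i\neq j}\mathcal{F}_i$ must lie in a common family $\mathcal{F}_k$ with $k\neq j$. If only one family $\mathcal{F}_{k_0}$ contained a disjoint pair, then taking $j=k_0$ would make $\bigcup_{i\neq k_0}\mathcal{F}_i$ pairwise intersecting, contradicting the failure of (1) at $j=k_0$. So I may fix $A_1,A_2\in\mathcal{F}_a$ and $B_1,B_2\in\mathcal{F}_b$ with $a\neq b$, $A_1\cap A_2=\emptyset=B_1\cap B_2$, and $A_i\cap B_j\neq\emptyset$ for all $i,j\in\{1,2\}$.

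The second step is to extract hyperplanes from separation. Let $H_A$ be a hyperplane separating $A_1$ from $A_2$. Every $C\in\bigcup_{i\neq a}\mathcal{F}_i$ contains a point in $A_1$ and a point in $A_2$, which lie on opposite sides of $H_A$; by convexity, $C$ therefore meets $H_A$. Hence $H_A$ is a \emph{single} hyperplane transversal to $\bigcup_{i\neq a}\mathcal{F}_i$. Symmetrically, a separator $H_B$ of $B_1,B_2$ is a single hyperplane transversal to $\bigcup_{i\neq b}\mathcal{F}_i$. Combining the two, $\{H_A,H_B\}$ already crosses every set in $\bigcup_i\mathcal{F}_i$, giving a 2-hyperplane transversal.

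The main obstacle, and the step I expect to require the most care, is promoting this 2-hyperplane transversal to a single hyperplane transversal, as claimed in (2). My plan is to use a KKM/continuity argument analogous to the one used in the proof of Theorem \ref{thm:PlaneVCH}. Concretely, parameterize a continuous one-parameter family of hyperplanes interpolating between $H_A$ and $H_B$ (for example, by rotating around the $(d-2)$-flat $H_A\cap H_B$), and for each hyperplane $H$ in the family classify the ``bad'' sets of $\bigcup_i\mathcal{F}_i$ (those missing $H$) by which open half-space of $H$ contains them. If no hyperplane in the family is a transversal, the two resulting subsets of the parameter space cover it, and a KKM/intermediate-value argument produces a hyperplane $H^\star$ with bad sets on both sides; such a pair of bad sets is disjoint, hence must lie inside a single family, and by the cross-intersection hypothesis this disjoint pair is compatible with the structure of $\mathcal{F}_a$ (or $\mathcal{F}_b$) used to define $H_A$ (or $H_B$), from which one derives a contradiction. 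The technical heart will be the correct choice of parameterization and verifying the hypotheses of the KKM theorem on the resulting simplex.
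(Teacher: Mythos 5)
Your first two steps are sound: if conclusion 1 fails for every $j$ then indeed two distinct families $\mathcal{F}_a,\mathcal{F}_b$ each contain a disjoint pair, and a hyperplane strictly separating a disjoint pair of $\mathcal{F}_a$ is automatically transversal to $\bigcup_{i\neq a}\mathcal{F}_i$. But the lemma asserts a \emph{single} hyperplane transversal, and the step where you try to obtain it is only a sketch whose mechanism, as described, would fail. The engine of the correct argument is an observation you never state: \emph{two disjoint pairs lying in different families can never be separated by parallel hyperplanes}. If $H$ separates $A_1$ from $A_2$ (with $A_1,A_2\in\mathcal{F}_a$) and $G$ is parallel to $H$ and separates $B_1$ from $B_2$ (with $B_1,B_2\in\mathcal{F}_b$, $b\neq a$), then ordering the four open half-spaces along the common normal forces some $A_i$ to be disjoint from some $B_j$, contradicting the cross-intersection hypothesis; this is exactly what Figure \ref{figure_proof_Lemma} illustrates, and it is what turns ``two colors appear'' into a contradiction. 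Your proposed classification of bad sets by \emph{side} of the moving hyperplane cannot produce any contradiction: already at the starting hyperplane $H_A$ there are bad sets on both sides, namely $A_1$ and $A_2$ themselves, so ``a hyperplane with bad sets on both sides'' is the generic situation rather than an absurdity, and the concluding sentence of your sketch does not identify any actual inconsistency.

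The parameterization is also too rigid. Restricting to hyperplanes through the fixed $(d-2)$-flat $H_A\cap H_B$ loses the implication ``this hyperplane is not a transversal $\Rightarrow$ some pair of sets is separated in this direction'': a set can miss a particular hyperplane without any two sets being separated by a parallel one, in which case a \emph{translate} of your hyperplane is the transversal you want. The paper instead parameterizes by the normal direction $u\in\mathbb{S}^{d-1}$ and projects all sets onto the line spanned by $u$: by one-dimensional Helly, either the projected intervals share a point (giving a hyperplane transversal orthogonal to $u$, i.e., conclusion 2) or some two sets are strictly separated orthogonally to $u$, and such a pair is necessarily monochromatic. Coloring each direction by the family containing such a pair gives open color classes; the parallel-separation observation shows that distinct color classes cannot meet, so by connectedness of the sphere either some direction is uncolored (conclusion 2) or a single color $j$ covers all directions, in which case $\bigcup_{i\neq j}\mathcal{F}_i$ meets a hyperplane in every direction and hence is pairwise intersecting (conclusion 1). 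Your contrapositive can be salvaged by running this coloring along an arc of directions from the normal of $H_A$ to the normal of $H_B$, but both the direction-plus-translate setup and the parallel-separation contradiction must be supplied; without them the proof has a genuine gap.
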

		
		\begin{proof}
			As usual, we may assume without loss of generality that the sets in $\bigcup_{i=1}^{n} \mathcal{F}_{i}$ are compact.
			
			For every direction $u \in \mathbb{S}^{d-1}$, let $\ell_{u}$ be the line through the origin with direction $u$. By projecting the sets in $\bigcup_{i=1}^{n} \mathcal{F}_{i}$ to the line $\ell_{u}$ we obtain a finite family of intervals in the line $\ell_{u}$.
			If for some $u \in \mathbb{S}^{d-1}$ the intervals in the line $\ell_{u}$ have a common point $p$ then the hyperplane through $p$ orthogonal to $u$ is transversal to the family $\bigcup_{i=1}^{n} \mathcal{F}_{i}$, and we are done.
			Otherwise, for every $u \in \mathbb{S}^{d-1}$, there are two disjoint intervals in the line $\ell_{u}$. Hence, for every $u \in \mathbb{S}^{d-1}$, there are two sets $A_{u}, B_{u}$ in the family $\bigcup_{i=1}^{n} \mathcal{F}_{i}$ that are separated by a hyperplane orthogonal to $u$. Since $A_{u} \cap B_{u} = \emptyset$, then $A_{u}$ and $B_{u}$ must be in the same family $\mathcal{F}_{i}$, for some $i \in \{ 1, \dots, n \}$.
			
			We color the sphere $\mathbb{S}^{d-1}$ as follows. If there are two sets $A_{u}, B_{u} \in \mathcal{F}_{i}$ that are separated by a hyperplane orthogonal to $u$, we color $u \in \mathbb{S}^{d-1}$ with color $i$. Let $O_{i}$ be the subset of $\mathbb{S}^{d-1}$ with color $i$. Since the convex sets are compact, the sets $O_{i}$ are open. Notice that the sets $O_{1}, \dots, O_{n}$ cover $\mathbb{S}^{d-1}$. We consider two cases.
			
			First suppose that at least two sets from $O_{1}, \dots, O_{n}$ are non-empty. Since the sets $O_{1}, \dots, O_{n}$ are open, there are two indices $i,j \in \{ 1, \dots, n \}$ with $i \neq j$ such that $O_{i}$ and $O_{j}$ intersect. Let $u\in O_i\cap O_j$, then there are hyperplanes $H$ and $G$ orthogonal to $u$, and sets $A,B\in \mathcal{F}_{i}$ and $C,D\in\mathcal{F}_{j}$ such that $H$ strictly separates $A$ from $B$, and $G$ strictly separates $C$ from $D$. This implies that one of the sets in $\{A,B\}$ is strictly separated from one of the sets in $\{C,D\}$ (see Figure \ref{figure_proof_Lemma}) which contradicts the hypothesis of the lemma.
			
			\begin{figure}
				\includegraphics{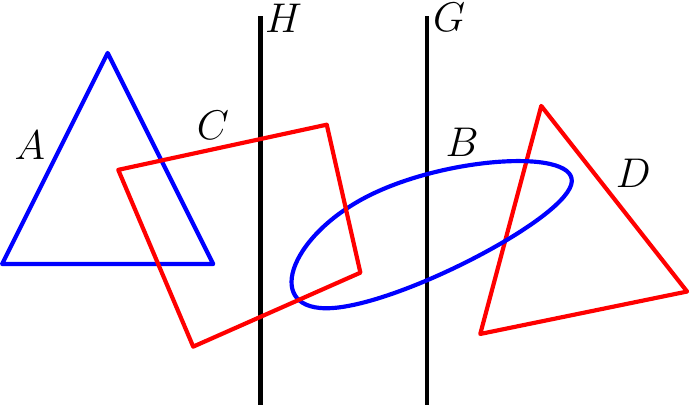}
				\caption{In this example $A \cap D = \emptyset$.}
				\label{figure_proof_Lemma}
			\end{figure}
			
			In the second case there exists $j \in \{ 1, \dots, n \}$ such that $\mathbb{S}^{d-1} = O_{j}$, so for every $u \in \mathbb{S}^{d-1}$ there is a hyperplane $H_{u}$ orthogonal to $u$ and sets $A_{u}, B_{u} \in \mathcal{F}_{j}$ such that $H_u$ strictly separates $A_{u}$ from $B_{u}$. Let $C \in \bigcup_{i \neq j} \mathcal{F}_{i}$, since $C$ intersects $A_{u}$ and $B_{u}$ it must also intersect $H_u$. This shows that the family $\bigcup_{i \neq j} \mathcal{F}_{i}$ has a hyperplane transversal in every direction.
			
			If there are two sets $C, D \in \bigcup_{i \neq j} \mathcal{F}_{i}$ such that $C \cap D = \emptyset$, then $C$ and $D$ can be separated by a hyperplane $H$. Hence $C$ and $D$ do not have a hyperplane transversal parallel to $H$, a contradiction. Therefore every two sets in $\bigcup_{i \neq j} \mathcal{F}_{i}$ intersect.
		\end{proof}
		
		We prove that Theorem \ref{thm:SpecialVCH} holds with $f_{\K}(d) = 1$ and $g_{\K}(d) = d$. If $\bigcup_{i=1}^{n} \mathcal{F}_{i}$ has a hyperplane transversal, we are done. Otherwise, by Lemma \ref{lem:specialVCH}, there is $j \in \{ 1, \dots, n \}$ such that every two sets in $\bigcup_{i \neq j} \mathcal{F}_{i}$ intersect. Then the families $\mathcal{A} = \bigcup_{i \neq j} \mathcal{F}_{i}$ and $\mathcal{B} = \bigcup_{i=1}^{n} \mathcal{F}_{i}$ satisfy the hypothesis of Lemma \ref{lem:MSRPR}. Therefore, either $\mathcal{A} = \bigcup_{i \neq j} \mathcal{F}_{i}$ can be pierced by $1$ point or $\mathcal{B} = \bigcup_{i=1}^{n} \mathcal{F}_{i}$ can be crossed by $d$ hyperplanes. This shows that we may always choose $f_{\K}(d) = 1$ and $g_{\K}(d) = d$. Notice that as a corollary of this we obtain a second proof of Theorem \ref{thm:PlaneVCH}.
		
		We are now ready to prove the parts (a) to (f) of Theorem \ref{thm:SpecialVCH}. In each of these cases, if the family $\bigcup_{i=1}^{n} \mathcal{F}_{i}$ has a hyperplane transversal, we are done. Otherwise, by Lemma \ref{lem:specialVCH}, there exists $j \in \{ 1, \dots, n \}$ such that every two sets in $\bigcup_{i \neq j} \mathcal{F}_{i}$ intersect. If this last condition implies that $\bigcup_{i \neq j} \mathcal{F}_{i}$ can be pierced by $f_\K(d)$ points, we are done.
		For parts (a), (b), (c), (d), (e) and (f) we conclude by using Theorems \ref{thm:Karasev}, \ref{thm:Kim}, \ref{thm:Dumitrescu}, \ref{thm:Danzer}, \ref{thm:rectangles} and \ref{thm:Grunbaum}, respectively.
	\end{proof}

	\section{Proof of Theorem \ref{thm:DolnikovNew}}\label{sec:DolnikovNew}
	
	In \cite{Martinez2020} it is shown that if instead of using $d+1$ finite families in the Colorful Helly theorem we use only $d$, then it may be false that one of the families can be pierced by few points. In this section we prove that if the families are translates of a convex body, one of the families can indeed be pierced by few points.
	
	\begin{proof}[Proof of Theorem \ref{thm:DolnikovNew}]
		By part (f) of Theorem \ref{thm:SpecialVCH}, either there is $j \in \{ 1, \dots, n \}$ such that $\bigcup_{i \neq j} \mathcal{F}_{i}$ can be pierced by $f_K(d)$ points or the family $\bigcup_{i=1}^{n} \mathcal{F}_{i}$ has a hyperplane transversal. In the first case we are done so assume that $\bigcup_{i=1}^{n} \mathcal{F}_{i}$ has a hyperplane transversal in the direction $v_{1} \in \mathbb{S}^{d-1}$.
		
		For every $u$ there is either a hyperplane orthogonal to $u$ transversal to $\bigcup_{i=1}^{n} \mathcal{F}_{i}$ or there are two sets in some family $\mathcal{F}_{k}$ separated by a hyperplane $H_u$ orthogonal to $u$. In the second case $H_u$ is transversal to the family $\bigcup_{i \neq k} \mathcal{F}_{i}$. In any case, for every direction $u$, there is a hyperplane transversal to at least $n-1$ of the families $\mathcal{F}_{1}, \dots, \mathcal{F}_{n}$.
		
		We color $u \in \mathbb{S}^{d-1}$ with color $i$ if there is a hyperplane orthogonal to $u$ and transversal to the family $\mathcal{F}_{i}$. Then every point in $\mathbb{S}^{d-1}$ is colored with at least $n-1$ colors. Moreover, if a vector $v$ is colored with all $n$ colors, there is a hyperplane orthogonal to $v$ transversal to the family $\bigcup_{i=1}^{n} \mathcal{F}_{i}$. For instance, $v_{1}$ is colored with all $n$ colors.
		
		For every $i \in \{ 1, \dots, n \}$, let $F_{i}$ be the set of all the points in the sphere $\mathbb{S}^{d-1}$ with color $i$. Since the sets in the families are compact, then the sets $F_{i}$ are closed.
		
		\begin{lemma}\label{lem:colorful sphere}
			If the sphere $\mathbb{S}^{d-1}$ is covered by $n$ closed families $F_{1}, \dots, F_{n}$ such that every $u \in \mathbb{S}^{d-1}$ belongs to at least $n-1$ families and there exists $v_{1} \in \bigcap_{i=1}^{n} F_{i}$, then there is an orthonormal basis $\{ v_{1}, \dots, v_{d} \}$ of $\mathbb{R}^{d}$ and an index $j \in \{ 1, \dots, n \}$ such that $v_{1}, \dots, v_{d}\in\bigcap_{i \neq j} F_{i}$.
		\end{lemma}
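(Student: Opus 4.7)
The key observation is that the multiplicity hypothesis ``every $u \in \mathbb{S}^{d-1}$ belongs to at least $n-1$ families'' is equivalent to saying that the complements $U_i := \mathbb{S}^{d-1} \setminus F_i$ are pairwise disjoint; and of course each $U_i$ is open since $F_i$ is closed. The hypothesis $v_1 \in \bigcap_i F_i$ translates to $v_1 \notin \bigcup_i U_i$. Rewritten in these terms, the goal is to produce an orthonormal basis $\{v_1,\dots,v_d\}$ with prescribed first vector $v_1$ such that $\{v_1,\dots,v_d\}$ meets at most one $U_i$; equivalently, all basis vectors lie in $\bigcap_{i\neq j} F_i$ for some common $j$.

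The plan is to prove by induction on dimension the following (unconditional) auxiliary claim: \emph{for every $k\ge 1$ and every collection of pairwise disjoint open sets $V_1,\dots,V_n$ in $\mathbb{S}^{k-1}$, there exists an orthonormal basis of $\mathbb{R}^k$ meeting at most one $V_i$.} Once this is available, I would apply it inside $v_1^{\perp} \cong \mathbb{R}^{d-1}$ to the disjoint open sets $V_i := U_i \cap v_1^{\perp}$ on $\mathbb{S}^{d-2}$, obtaining an orthonormal basis $\{v_2,\dots,v_d\}$ of $v_1^{\perp}$ that meets at most one $V_i$; prepending $v_1$ (which lies in no $U_i$) yields the desired orthonormal basis of $\mathbb{R}^d$.

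For the induction, the base case $k=1$ is immediate since any orthonormal basis of $\mathbb{R}^1$ consists of a single vector, and so automatically meets at most one of the disjoint sets $V_i$. For the inductive step with $k\ge 2$, I would split into two cases. If $\bigcup_i V_i = \mathbb{S}^{k-1}$, then the $V_i$ form a pairwise disjoint open cover of the connected sphere $\mathbb{S}^{k-1}$; hence each $V_i$ is clopen, and by connectedness exactly one is non-empty and equals all of $\mathbb{S}^{k-1}$, so any orthonormal basis satisfies the conclusion. Otherwise, pick any $w_k \in \mathbb{S}^{k-1}\setminus \bigcup_i V_i$; the restrictions $V_i \cap w_k^{\perp}$ are pairwise disjoint open subsets of $w_k^{\perp}\cap \mathbb{S}^{k-1} \cong \mathbb{S}^{k-2}$, and the inductive hypothesis supplies an orthonormal basis $\{w_1,\dots,w_{k-1}\}$ of $w_k^{\perp}$ meeting at most one of them. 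Since $w_k$ lies in no $V_i$, appending it preserves the property.

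The proof is short and the only step that requires care is the connectedness argument handling the covering case, which hinges on $\mathbb{S}^{k-1}$ being connected for $k\ge 2$ and therefore forces the disjoint open cover to be trivial. The conceptual heart of the argument is the translation at the start: rewriting the ``covered with multiplicity $\ge n-1$'' hypothesis as ``the complements are pairwise disjoint'' reduces what looks like a colorful statement about closed sets to a simple iterative reduction in which we peel off one basis vector at a time, always choosing it outside the (disjoint open) danger zone when we can, and invoking connectedness when we cannot.
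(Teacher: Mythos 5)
Your proof is correct. The translation at the start is exact: the multiplicity hypothesis says precisely that the complements $U_i=\mathbb{S}^{d-1}\setminus F_i$ are pairwise disjoint open sets, and the conclusion says precisely that the basis meets at most one $U_i$; your auxiliary claim and its two-case induction (disjoint open cover of a connected sphere forces a single non-empty piece; otherwise pick a vector outside all $V_i$ and recurse in its orthogonal complement) are sound, including the base case $k=1$ and the fact that vectors of the sub-basis lie in $w_k^{\perp}\cap\mathbb{S}^{k-1}$, so meeting at most one restricted set implies meeting at most one $V_i$.

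The route differs from the paper's in how the induction is organized, though the essential ingredients coincide (induction on dimension through orthogonal subspheres, plus connectedness of $\mathbb{S}^{k-1}$ for $k\ge 2$). The paper inducts on the colorful statement itself: it keeps a distinguished vector lying in $\bigcap_i F_i$, and at each step either the subsphere $S=v_1^{\perp}\cap\mathbb{S}^{d-1}$ lies entirely in some $\bigcap_{i\neq j}F_i$ (finish by extending arbitrarily), or connectedness is used in the dual form -- the closed sets $S\cap\bigcap_{i\neq j}F_i$ cover $S$ and at least two are non-empty, so two of them meet -- to produce a new fully colored vector $v_2\in S\cap\bigcap_i F_i$ on which to recurse. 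You instead strip away the distinguished vector entirely, prove an unconditional claim about pairwise disjoint open sets (connectedness entering as ``a disjoint open cover has one non-empty member''), and reinstate $v_1$ only at the end by working inside $v_1^{\perp}$. Your version buys a cleaner inductive statement with no side condition to re-establish at each step; the paper's version avoids the complement reformulation and works directly with the sets that arise in its application. Both are short and elementary, and either would serve the paper's purpose.
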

		
		\begin{proof}[Proof of Lemma \ref{lem:colorful sphere}]
			We proceed by induction on $d$. If $d=2$, we extend $v_{1}$ to an orthonormal basis $\{ v_{1}, v_{2} \}$ of $\mathbb{R}^{2}$. By hypothesis, there exists $j \in \{ 1, \dots, n \}$ such that $v_{2} \in \bigcap_{i \neq j} F_{i}$. Thus, $v_{1}, v_{2}$ are both in $\bigcap_{i \neq j} F_{i}$.
			
			For $d>2$ we consider the $(d-2)$-dimensional sphere $S \subset \mathbb{S}^{d-1}$ orthogonal to $v_{1}$. There are two cases. First suppose that there exists $j \in \{ 1, \dots, n \}$ such that all the points in $S$ are in $\bigcap_{i \neq j} F_{i}$. Then we extend $v_{1}$ to an orthonormal basis $\{ v_{1}, \dots, v_{d} \}$ of $\mathbb{R}^{d}$. Since $\{ v_{2}, \dots, v_{d} \} \subset S$, then $v_{1}, \dots, v_{d}$ are in $\bigcap_{i \neq j} F_{i}$ and we are done.
			Otherwise, suppose that there are $j, k \in \{ 1, \dots, n \}$ with $j \neq k$ such that $S$ contains points from both $\bigcap_{i \neq j} F_{i}$ and $\bigcap_{i \neq k} F_{i}$. Since $S$ is connected and the sets $F_{i}$ are closed, there is a point $v_{2} \in S\cap \bigcap_{i=1}^{n} F_{i}$. By using the induction hypothesis on the $(d-1)$-dimensional sphere $S$, we can complete $\{v_1,v_2\}$ to an orthonormal basis $\{ v_{1}, \dots, v_{d} \}$ of $\mathbb{R}^{d}$ such that $v_{1}, \dots, v_{d}\in\bigcap_{i \neq j} F_{i}$ for some $j \in \{ 1, \dots, n \}$.
		\end{proof}
		
		This lemma gives us an orthonormal basis $\{ v_{1}, \dots, v_{d} \}$ of $\mathbb{R}^{d}$ and $j \in \{ 1, \dots, n \}$ such that the family $\bigcup_{i \neq j} \mathcal{F}_{i}$ has hyperplane transversals $H_{1}, \dots, H_{d}$ orthogonal to $v_{1}, \dots, v_{d}$, respectively.
		
		Without loss of generality we may assume that $K$ is compact and has non-empty interior. Then bye John's ellipse theorem \cite{John1948} there is an ellipse $E\subset K$ with center $c$ such that $E\subset K\subset c+d(E-c)$. After applying a linear transformation, we may assume that $E$ is a unit ball. Let $\mathcal{G}_{j}$ be the family of balls of radius $d$ that contain some set from $\bigcup_{i \neq j} \mathcal{F}_{i}$ and let $\mathcal{H}_{j}$ be the family of balls of radius $1$ that are contained in some set from $\bigcup_{i \neq j} \mathcal{F}_{i}$. Since every set in $\bigcup_{i \neq j} \mathcal{F}_{i}$ intersects the hyperplanes $H_{1}, \dots, H_{d}$, it follows that for every direction $v_{i}$, the centers of the balls in $\mathcal{G}_{j}$ are contained in the strip bounded by two hyperplanes parallel to $H_{i}$. The width of this strips is $2 d$. Therefore the centers of the balls in $\mathcal{G}_{j}$ are contained in a $d$-dimensional cube with side $2 d$.
		
		The $d$-dimensional cube with side $2d$ can be covered by $f_K(d)$ unit balls \cite{Rogers1963} (see also \cite{Verger2005}), where $f_K(d)$ depends only on the dimension. This implies that every ball in $\mathcal{H}_{j}$ intersects at least one of the centers of these unit balls. Therefore, the family $\bigcup_{i \neq j} \mathcal{F}_{i}$ can be pierced by $f_K(d)$ points.
		
		To prove part (a) of Theorem \ref{thm:DolnikovNew} this bound can be made precise. We require to cover a square of side $4$ with unit circles. This can easily be done with $9$ circles as shown on the left side of Figure \ref{fig:circles-cover}. This implies that the family $\bigcup_{i \neq j} \mathcal{F}_{i}$ can be pierced by $9$ points.
		
		\begin{figure}
			\includegraphics{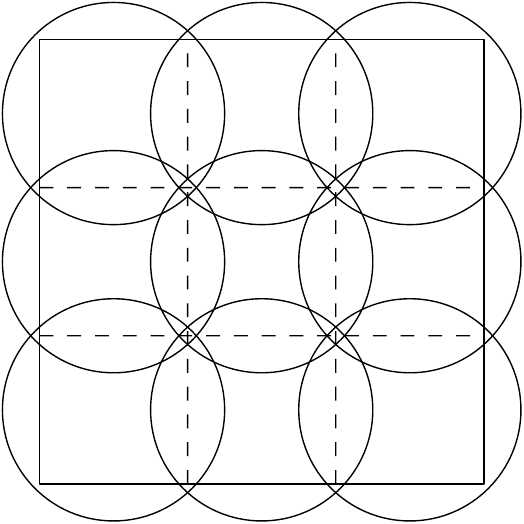}\qquad
			\includegraphics{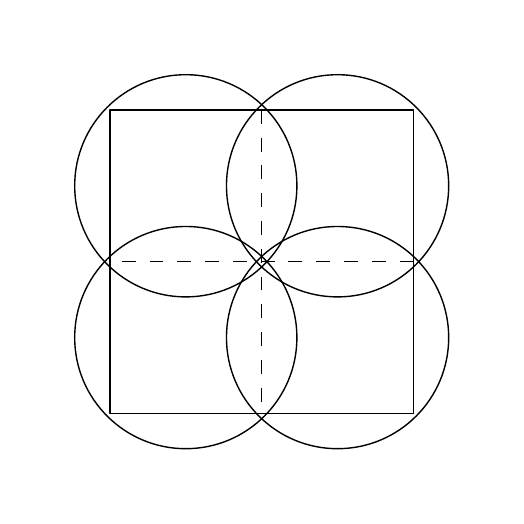}
			\caption{The square of side $4$ can be covered with $9$ unit circles, a square of side $1+\sqrt{3}$ can be covered with $4$ unit circles.}
			\label{fig:circles-cover}
		\end{figure}
		
		For part (b), if $K\subset\R^2$ is of constant width then, after applying a suitable linear transformation, we may assume that $K$ contains a unit circle and is contained in a circle of radius $\frac 12(1+\sqrt{3})$ (see e.g. \cite[Theorems 3.4.1 and 3.4.2]{Martini2019}). We then continue as before and we are left with a square of side $1+\sqrt{3}$ which we need to cover with unit circles. This may be accomplished with $4$ circles as shown on the right side of Figure \ref{fig:circles-cover}. This implies that the family $\bigcup_{i \neq j} \mathcal{F}_{i}$ can be pierced by $4$ points.
	\end{proof}
	
	To prove Theorem \ref{thm:DolnikovNew} we used Theorem \ref{thm:SpecialVCH}, and to prove Theorem \ref{thm:SpecialVCH} we used the monochromatic theorems. However, the monochromatic theorems are not necessary to prove Theorem \ref{thm:DolnikovNew}. Indeed, we recall that in the proof of Theorem \ref{thm:SpecialVCH} we applied Lemma \ref{lem:specialVCH} and we used the monochromatic theorems in the case when there is $j \in \{ 1, \dots, n \}$ such that every two sets in $\bigcup_{i \neq j} \mathcal{F}_{i}$ intersect. In the proof of Theorem \ref{thm:DolnikovNew} we can avoid the monochromatic theorems since the fact that every two sets in $\bigcup_{i \neq j} \mathcal{F}_{i}$ intersect is equivalent to that the family $\bigcup_{i \neq j} \mathcal{F}_{i}$ has hyperplane transversals in every direction (in the plane we have line transversals), and we can conclude in the same way as in the other case proved in Theorem \ref{thm:DolnikovNew}. Thus, to prove the bounds given in Theorem \ref{thm:DolnikovNew} we only need our Lemma \ref{lem:specialVCH}.
	
	\section{Proof of Theorem \ref{thm:DolnikovCW}}\label{sec:DolnikovCW}
	
	In this section we prove that Dol’nikov's conjecture (Conjecture \ref{conj:Dolnikov}) holds if $K$ is of constant width or if $K$ has Banach-Mazur distance at most $1.1178$ to the disk. Our results are slightly stronger than what Dol’nikov's conjecture would imply since we use $2$ only colors instead of $3$.
	
	\begin{proof}[Proof of Theorem \ref{thm:DolnikovCW}]
		We proceed in a way similar to the proof of Theorem \ref{thm:DolnikovNew}.
		If $\mathcal{F}_{1}$ or $\mathcal{F}_{2}$ can be pierced by $3$ points, we are done. Otherwise, by part (a) of Theorem \ref{thm:SpecialVCH}, $\mathcal{F}_{1} \cup \mathcal{F}_{2}$ has a line transversal $l$. Let $u_{1} \in \mathbb{S}^{1}$ be a direction orthogonal to $l$.
		
		A consequence of the $1$-dimensional colorful Helly theorem is that in every direction there is a line transversal to either $\mathcal{F}_{1}$ or $\mathcal{F}_{2}$. We color the sphere $\mathbb{S}^{1}$ so that $u\in\mathbb{S}^{1}$ has color $i$ whenever there is a line transversal to the family $\mathcal{F}_{i}$ orthogonal to $u$.
		
		Let $u_{2}, u_{3}, u_{4} \in \mathbb{S}^{1}$ be unit vectors such that, for each $i\in\{1,2,3\}$, the counter-clockwise angle between $u_i$ and $u_{i+1}$ is $\frac{\pi}{4}$. By the pigeon-hole principle, among $u_{2}$, $u_{3}$ and $u_{4}$ there are two vectors with a common color $j \in \{ 1,2 \}$, so there are $3$ vectors among $u_1$, $u_2$, $u_3$ and $u_4$ with color $j$. We may assume that these $3$ vectors are $v_{1}, v_{2}, v_{3}$ and satisfy that $v_{1}$ and $v_{2}$ are orthogonal and the counter-clockwise angle between $v_{1}$ and $v_{3}$ is $\frac{\pi}{4}$. Let $l_{1}$, $l_{2}$ and $l_{3}$ be lines transversal to $\mathcal{F}_{j}$ orthogonal to $v_{1}$, $v_{2}$ and $v_{3}$, respectively.
		
		To prove the first part of Theorem \ref{thm:DolnikovCW}, suppose that $K$ has constant width $1$. Then every translate of $K$ is contained in a square of side $1$. We denote $\mathcal{G}_{j}$ as the family of squares of side $1$ that contains some set from $\mathcal{F}_{j}$. Since every set in $\mathcal{F}_{j}$ intersects the lines $l_{1}, l_{2}, l_{3}$, it follows that for every direction $v_{i}$, the centers of the squares in $\mathcal{G}_{j}$ are contained in the strip bounded by two  lines parallel to $l_{i}$. The width of these strips is $1$. Assume that $v_1=(1,0)$, then the centers of the squares in $\mathcal{G}_{j}$ are always contained in one of the two pentagons from Figure \ref{figure_regions-2colors}.
		
		\begin{figure}
			\includegraphics{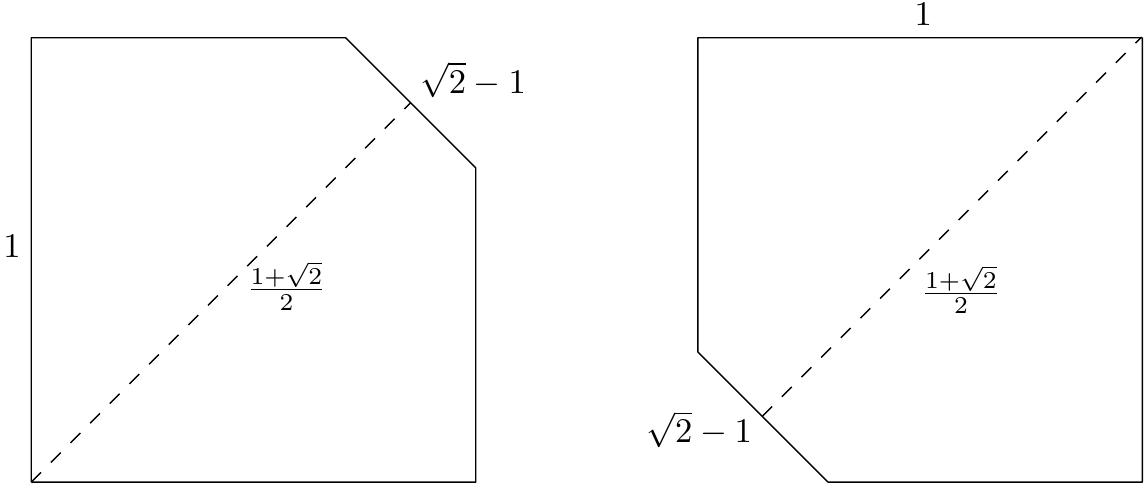}
			\caption{The region bounded by the parallel lines to $l_{1}, l_{2}, l_{3}$ is contained in one of the two pentagons.}
			\label{figure_regions-2colors}
		\end{figure}
		
		We claim that every one of the two pentagons can be covered by $3$ translates of $-K$. This would imply that the family $\mathcal{F}_{j}$ can be pierced by $3$ points. By symmetry we only have to show that one of the two pentagons from Figure \ref{figure_regions-2colors} can be covered by $3$ translates of $-K$. In order to do this, we label and divide the pentagon as in Figure \ref{figure_regions2-2colors}.
		Here $I$ is the midpoint of the segment $CD$. The points $F$, $G$ and $H$ are points inside the pentagon $ABCDE$ such that $F$ is in the segment $AE$, $G$ is in the segment $AB$ and $AGHF$ is a square of side $\frac{5}{8}$.
		
		\begin{figure}
			\includegraphics{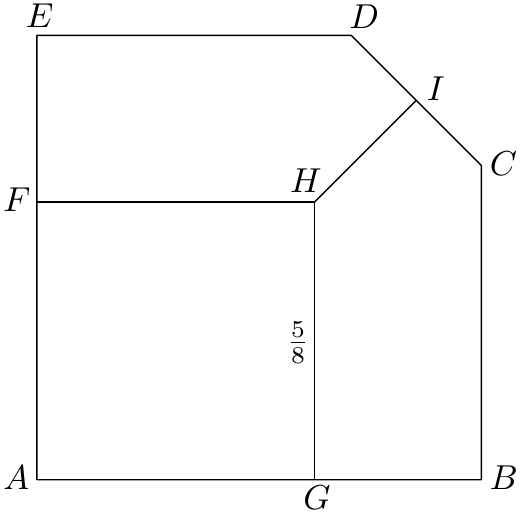}
			\caption{We divide the pentagon $ABCDE$ in a square of side $\frac{5}{8}$ and two pentagons.}
			\label{figure_regions2-2colors}
		\end{figure}
		
		Below we prove that the square $AGHF$ and the pentagons $GBCIH$ and $FHIDE$ can be covered by one translate of $-K$. By using a result of Chakerian \cite{Chakerian1969}, we only need to prove that every congruent copy of the square $AGHF$ and every congruent copy of the pentagon $GBCIH$ (or $FHIDE$) can be covered by a Reuleaux triangle of width $1$.
		Figure \ref{figure_ReuleauxS-2colors} shows how every rotation of the square $AGHF$ of side $\frac{5}{8}$ can be covered by a Reuleaux triangle of width $1$.
		Figure \ref{figure_ReuleauxP-2colors} shows how every rotation of the pentagon $FHIDE$ can be covered by a Reuleaux triangle of width $1$.
		
		\begin{figure}
			\includegraphics{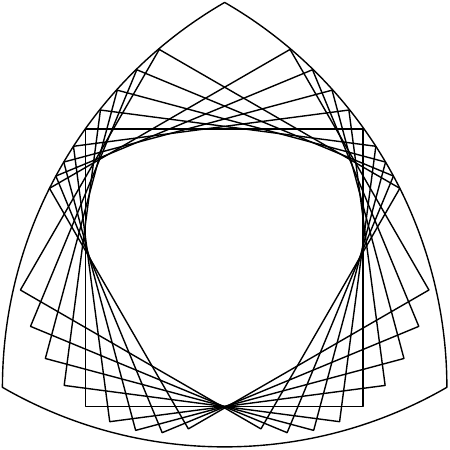}
			\caption{Every rotation of the square of side $\frac{5}{8}$ can be covered by a Reuleaux triangle of width $1$.}
			\label{figure_ReuleauxS-2colors}
		\end{figure}

		\begin{figure}
			\includegraphics{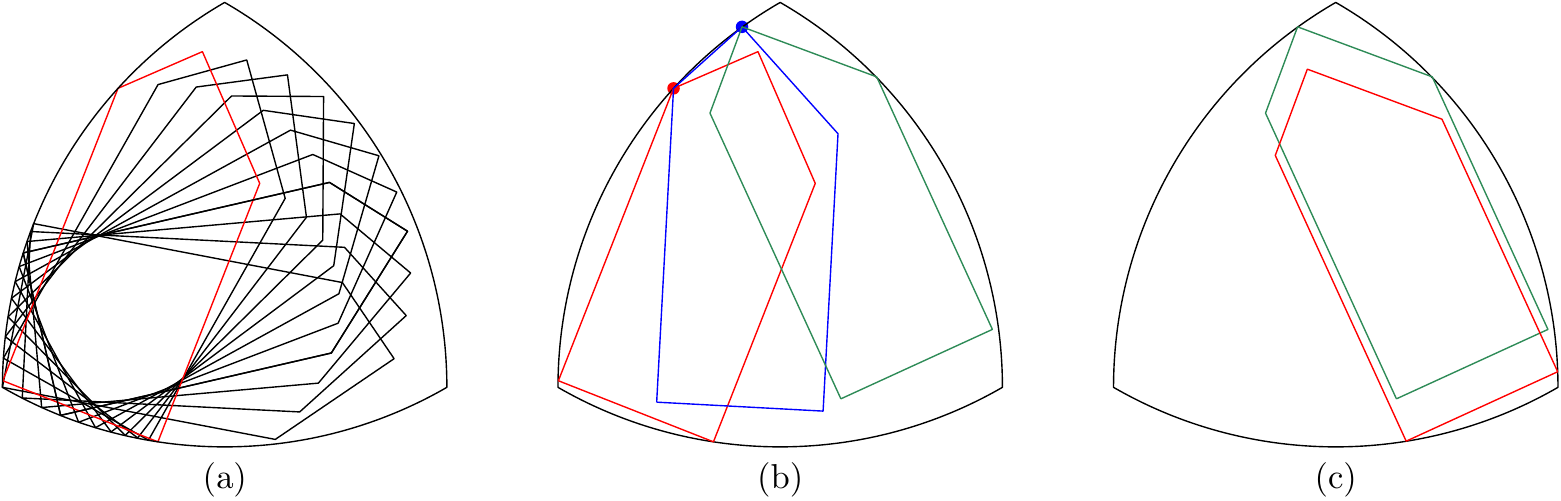}
			\caption{Every rotation of the pentagon can be covered by a Reuleaux triangle of width $1$. Indeed, in (a) we rotate the pentagon until the red pentagon, then in (b) we rotate the red pentagon with center in the red vertex until the blue pentagon, then we rotate the blue pentagon with center in the blue vertex until the green pentagon. Finally, in (c) we translate the green pentagon to the red pentagon and this red pentagon is a pentagon in (a) by rotating $\frac{2 \pi}{3}$.}
			\label{figure_ReuleauxP-2colors}
		\end{figure}
		
		For the second part of Theorem \ref{thm:DolnikovCW}, where the Banach-Mazur distance between $K$ and a disk is at most $1.1178$, we may assume (after a linear transformation) that $K$ contains a circle of radius $r = 1/2.2356 \approx 0.4473$ and is contained in a circle of radius $\frac 12$. Let $\mathcal{G}_{j}$ be the family of circles of radius $\frac 12$ that contain some set from $\mathcal{F}_{j}$ and let $\mathcal{H}_{j}$ be the family of circles of radius $r$ that are contained in some set from $\mathcal{F}_{j}$. Since every set in $\mathcal{F}_{j}$ intersects the lines $l_{1}, l_{2}, l_{3}$, it follows that for every direction $v_{i}$, the centers of the circles in $\mathcal{G}_{j}$ are contained in the strip bounded by two parallel lines to $l_{i}$. The width of these strips is $1$. Assume, without loss of generality, that $v_1=(1,0)$, then the centers of the circles in $\mathcal{G}_{j}$ are contained in one of the two pentagons from Figure \ref{figure_regions-2colors}.
		
		Suppose that the centers of the circles in $\mathcal{G}_{j}$ are contained in the pentagon $ABCDE$ shown in Figure \ref{figure_regions2-2colors}. This pentagon can be covered with $3$ circles of radius at most $r$ as shown in Figure \ref{figure_3circles-2colorsBM}. Here $I$ is the midpoint of the segment $CD$, $M$ and $N$ are the points in the segments $AE$ and $AB$ such that $AM = AN$ and $MN = 2 r$. From here a simple calculation gives that the circumradii of the triangles $MIE$, $ANM$ and $NBI$ are all at most $r$. This implies that the family $\mathcal{F}_{j}$ can be pierced by $3$ points.
	\end{proof}
		
		\begin{figure}
			
			\includegraphics{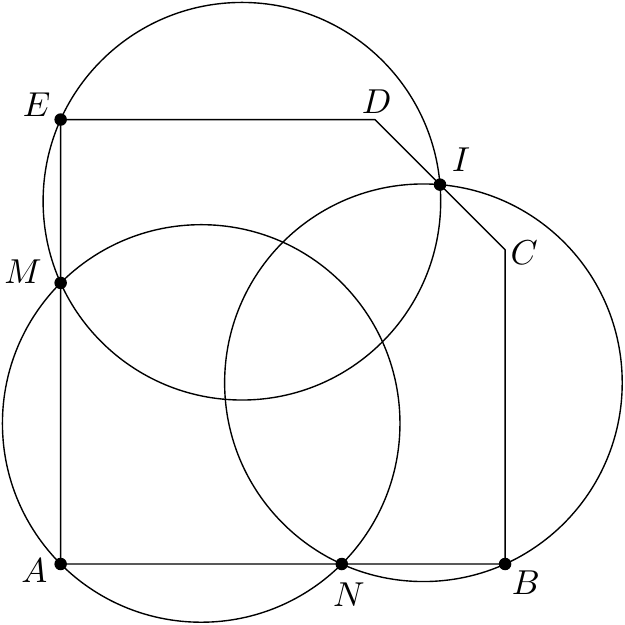}
			\caption{The pentagon can be covered with $3$ circles of radius at most $0.4474$.}
			\label{figure_3circles-2colorsBM}
		\end{figure}
		
	
	
	\section{Conclusions and future work}
	
	In Theorem \ref{thm:PlaneVCH} we gave the best numbers for the $2$-dimensional case of Theorem \ref{thm:VCH}. An interesting question is what are the best numbers in this Theorem for any given dimension. In particular, does Theorem \ref{thm:VCH} hold with $f(d) = 1$ and $g(d) = \lceil \frac{d+1}{2} \rceil$? Theorem \ref{thm:PlaneVCH} shows that this is true for $d=2$.
	
	Another question is how much can the numbers in Theorems \ref{thm:DolnikovNew} and \ref{thm:SpecialVCH} be improved. We do not believe that they are optimal.
	
	On the other hand, we use Lemma \ref{lem:specialVCH} to connect colorful theorems with monochromatic ones. For example, we used it to prove that Theorems \ref{thm:Karasev}, \ref{thm:Grunbaum}, \ref{thm:Danzer} and \ref{thm:rectangles} imply parts (a), (b), (c) and (d) of Theorem \ref{thm:SpecialVCH}, respectively. Perhaps similar techniques could be useful in order to prove the following colorful conjecture of Theorem \ref{thm:Danzer}.
	
	\begin{conjecture}
		Let $\mathcal{F}_{1}, \mathcal{F}_{2}$ be finite families of circles in $\mathbb{R}^{2}$. Suppose that $A \cap B \neq \emptyset$ for every $A \in \mathcal{F}_{1}$ and $B \in \mathcal{F}_{2}$. Then either $\mathcal{F}_{1}$ or $\mathcal{F}_{2}$ can be pierced by $4$ points.
	\end{conjecture}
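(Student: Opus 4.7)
The plan is to mimic the structure of the proofs of Theorem~\ref{thm:DolnikovCW} and part~(d) of Theorem~\ref{thm:SpecialVCH}: first reduce to the case where $\mathcal{F}_1\cup\mathcal{F}_2$ has a line transversal, then colour directions of $\mathbb{S}^1$ by the family in which a transversal orthogonal to that direction exists, and finally extract three transversals of one colour in a favourable angular arrangement. Suppose, for contradiction, that neither $\mathcal{F}_1$ nor $\mathcal{F}_2$ can be pierced by $4$ points; by Theorem~\ref{thm:Danzer}, each of the two families must then contain a disjoint pair.

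First I would apply Theorem~\ref{thm:SpecialVCH}(d) with $n=2$. Since neither family can be pierced by $4$ points, $\mathcal{F}_1\cup\mathcal{F}_2$ admits a line transversal $\ell$; let $u_1\in\mathbb{S}^1$ be a unit vector orthogonal to $\ell$. Next, for each direction $u\in\mathbb{S}^1$ I would project every circle onto a line with direction $u$; since every $A\in\mathcal{F}_1$ meets every $B\in\mathcal{F}_2$, the one-dimensional colourful Helly theorem forces the projected intervals of at least one family to share a common point, that is, at least one of $\mathcal{F}_1,\mathcal{F}_2$ admits a line transversal orthogonal to $u$. Colour $u$ with each $j\in\{1,2\}$ for which such a transversal exists; in particular, $u_1$ receives both colours. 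Choosing $u_2,u_3,u_4\in\mathbb{S}^1$ at successive counter-clockwise angles of $\pi/4$ from $u_1$ and pigeonholing exactly as in the proof of Theorem~\ref{thm:DolnikovCW}, some colour $j$ is shared by three of the four directions $u_1,u_2,u_3,u_4$; this yields line transversals $\ell_1,\ell_2,\ell_3$ to $\mathcal{F}_j$ whose normals include one orthogonal pair and one pair meeting at angle $\pi/4$.

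The main obstacle is the final step: using $\ell_1,\ell_2,\ell_3$ to pierce $\mathcal{F}_j$ with exactly $4$ points. For a circle of centre $c$ and radius $r$ belonging to $\mathcal{F}_j$, meeting each $\ell_i$ imposes $\operatorname{dist}(c,\ell_i)\le r$, so the locus of admissible centres is the $r$-neighbourhood of $\ell_1\cup\ell_2\cup\ell_3$. Unlike in the constant-width setting of Theorem~\ref{thm:DolnikovCW}, this region scales with $r$ and cannot be covered by a fixed number of disks uniformly in the radius. My plan to circumvent this is to stratify $\mathcal{F}_j$ by radius: a circle of sufficiently large radius will be forced to contain a prescribed anchor point such as the incentre or a vertex of the triangle bounded by $\ell_1,\ell_2,\ell_3$, while circles of small radius have tightly localised centres that, thanks to the $\pi/4$ angular rigidity, should be clusterable into three groups, each pierced by a single point. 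Reducing the split to exactly $4$ piercing points, rather than some larger constant, is expected to require an extremal argument anchored on the smallest circle in $\mathcal{F}_j$ and its interaction with $\ell_1,\ell_2,\ell_3$, calibrated against the sharpness example of Theorem~\ref{thm:Danzer}.
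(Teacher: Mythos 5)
This statement is left as an open conjecture in the paper --- no proof of it is given there --- so the only question is whether your argument closes it, and it does not. Your first two steps are sound and in fact mirror the strategy the paper itself suggests (apply Theorem~\ref{thm:SpecialVCH}(d), equivalently Lemma~\ref{lem:specialVCH}, to reduce to the case where $\mathcal{F}_1\cup\mathcal{F}_2$ has a line transversal, then colour directions and pigeonhole to get three transversal lines $\ell_1,\ell_2,\ell_3$ to one family $\mathcal{F}_j$ with the $\pi/2$ and $\pi/4$ angular configuration). But the entire difficulty of the conjecture is the final step, and there your proposal is only a plan, not a proof: ``should be clusterable'' and ``is expected to require an extremal argument'' are exactly the places where the argument of Theorem~\ref{thm:DolnikovCW} breaks down for homothets, and you do not supply the missing argument.

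Moreover, the one concrete idea you do offer for the large-radius stratum is false as stated. A disk of radius $r$ that meets each of $\ell_1,\ell_2,\ell_3$ only has its centre within distance $r$ of each line, and the intersection of three strips of half-width $r$ contains points whose distance to every fixed compact set (incentre, vertices, the whole triangle) grows like a constant multiple of $r$ larger than $r$ itself: for instance, with $\ell_1,\ell_2$ the coordinate axes, the disk of radius $r$ centred at $(r,r)$ meets both axes but misses the origin by a factor $\sqrt{2}$. So arbitrarily large circles meeting all three transversals need not contain any prescribed anchor point, and the small-radius clustering claim is never justified either. The uniform-in-radius covering obstruction you correctly identify is precisely why this remains open; as it stands, your proposal establishes only the reduction, not the conjecture.
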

	
	\section{Acknowledgments}
	This work was supported by UNAM-PAPIIT IN111923.
	
	\bibliographystyle{amsalpha}
	\bibliography{referencias}

\newcommand{\etalchar}[1]{$^{#1}$}
\providecommand{\bysame}{\leavevmode\hbox to3em{\hrulefill}\thinspace}
\providecommand{\MR}{\relax\ifhmode\unskip\space\fi MR }
\providecommand{\MRhref}[2]{%
  \href{http://www.ams.org/mathscinet-getitem?mr=#1}{#2}
}
\providecommand{\href}[2]{#2}
\begin{thebibliography}{MSRPR20}

\bibitem[ABB{\etalchar{+}}09]{Arocha2009}
J.~L. Arocha, I.~Bárány, J.~Bracho, R.~Fabila, and L.~Montejano, \emph{Very
  {C}olorful {T}heorems}, Discrete Comput. Geom. \textbf{42} (2009), no.~2,
  142--154.

\bibitem[Bá82]{Barany1982}
I.~Bárány, \emph{A generalization of {C}arathéodory theorem}, Discrete Math.
  \textbf{40} (1982), 141--152.

\bibitem[Cha69]{Chakerian1969}
G.D. Chakerian, \emph{Intersection and covering properties of convex sets}, The
  American Mathematical Monthly \textbf{76} (1969), no.~7, 753--766.

\bibitem[Dan86]{Danzer1986}
L.~Danzer, \emph{Zur {L}{\"o}sung des {G}allaischen {P}roblems {\"u}ber
  {K}reisscheiben in der euklidischen {E}bene}, Studia Sci. Math. Hungar.
  \textbf{21} (1986), 111--134.

\bibitem[DJ11]{Dumitrescu2011}
A.~Dumitrescu and M.~Jiang, \emph{Piercing translates and homothets of a convex
  body}, Algorithmica \textbf{61} (2011), 94--115.

\bibitem[FDFK93]{Fon1993}
D.G. Fon-Der-Flaass and A.~V. Kostochka, \emph{Covering boxes by points},
  Discrete mathematics \textbf{120} (1993), no.~1-3, 269--275.

\bibitem[GN22]{Gomez-Navarro2022}
C.~Gomez-Navarro, \emph{Colorful theorems in discrete and convex geometry},
  Master's thesis, Universidad Nacional Aut{\'o}noma de M{\'e}xico, 2022.

\bibitem[Gr{\"u}59]{Grunbaum1959}
B.~Gr{\"u}nbaum, \emph{On intersections of similar sets}, Portugal Math.
  \textbf{18} (1959), 155--164.

\bibitem[HD66]{Hadwiger1966}
H.~Hadwiger and H.~Debrunner, \emph{Combinatorial {G}eometry in the plane},
  Dover Publications, 1966.

\bibitem[Hel23]{Helly1923}
E.~Helly, \emph{Uber {M}engen konvexer {K}orper mit gemeinschaftlichen
  {P}unkten}, Jahres-berichte der Deutschen Math.-Verein. \textbf{32} (1923),
  175--176.

\bibitem[HPT08]{Holmsen2008}
A.~F. Holmsen, J.~Pach, and H.~Tverberg, \emph{Points sorrounding the origin},
  Combinatorica \textbf{28} (2008), no.~6, 633--644.

\bibitem[JCMS15]{Jeronimo2015}
J.~Jer{\'o}nimo-Castro, A.~Magazinov, and P.~Sober{\'o}n, \emph{On a problem by
  {D}ol’nikov}, Discrete Mathematics \textbf{338} (2015), no.~9, 1577--1585.

\bibitem[Joh48]{John1948}
F.~John, \emph{Extremum problems with inequalities as subsidiary conditions},
  Courant Anniversary Volume (1948), 187--204.

\bibitem[Kar00]{Karasev2000}
R.~N. Karasev, \emph{Transversals for families of translates of a
  two-dimensional convex compact set}, Discrete Comput. Geom. \textbf{24}
  (2000), no.~2, 345--354.

\bibitem[Kar08]{Karasev2008}
\bysame, \emph{Piercing {F}amilies of {C}onvex {S}ets with the $ d
  $-{I}ntersection {P}roperty in $ \mathbb{R}^{d} $}, Discrete Comput. Geom.
  \textbf{39} (2008), no.~4, 766--777.

\bibitem[KKM29]{Knaster1929}
B.~Knaster, C.~Kuratowski, and S.~Mazurkiewicz, \emph{Ein {B}eweis des
  {F}ixpunktsatzes f{\"u}r n-dimensionale {S}implexe}, Fundamenta Mathematicae
  \textbf{14} (1929), no.~1, 132--137.

\bibitem[KNPS06]{Kim2006}
S.-J. Kim, K.~Nakprasit, M.~J. Pelsmajer, and J.~Skokan, \emph{Transversal
  numbers of translates of a convex body}, Discrete Mathematics \textbf{306}
  (2006), no.~18, 2166--2173.

\bibitem[MMO19]{Martini2019}
H.~Martini, L.~Montejano, and D.~Oliveros, \emph{Bodies of constant width},
  Springer, 2019.

\bibitem[MSRPR20]{Martinez2020}
L.~Martínez-Sandoval, E.~Roldán-Pensado, and N.~Rubin, \emph{Further
  {C}onsequences of the {C}olorful {H}elly {H}ypothesis}, Discrete Comput.
  Geom. \textbf{63} (2020), no.~4, 848--866.

\bibitem[MZ21]{Mcginnis2021}
D.~McGinnis and S.~Zerbib, \emph{Line transversals in families of connected
  sets the plane}, arXiv preprint arXiv:2103.05565 (2021).

\bibitem[NT10]{Naszodi2010}
M.~Nasz{\'o}di and S.~Taschuk, \emph{On the transversal number and
  {V}{C}-dimension of families of positive homothets of a convex body},
  Discrete mathematics \textbf{310} (2010), no.~1, 77--82.

\bibitem[Rog63]{Rogers1963}
C.A. Rogers, \emph{Covering a sphere with spheres}, Mathematika \textbf{10}
  (1963), no.~2, 157--164.

\bibitem[VG05]{Verger2005}
J.L. Verger-Gaugry, \emph{Covering a ball with smaller equal balls in $
  \mathbb{R}^{n} $}, Discrete and Computational Geometry \textbf{33} (2005),
  143--155.

\end{thebibliography}
	
\end{document}